\documentclass[11pt,reqno]{amsart}
\usepackage{amsmath,mathtools}
\usepackage{mathrsfs}
\usepackage{graphicx}
\usepackage{epsfig,epsf,psfrag}
\usepackage{amsfonts}
\usepackage{setspace}
\usepackage{color}
\usepackage[font=small]{caption}
\usepackage[font=footnotesize]{subcaption}
\usepackage[top=1in, bottom=1in, left=1.2in, right=1.2in]{geometry}
\usepackage{afterpage}
\usepackage{bm}
\usepackage{multicol}
\usepackage{multirow}
\usepackage{diagbox}
\usepackage{booktabs}
\usepackage{algorithm}
\usepackage{algorithmic}
\usepackage{lineno}
\usepackage{indentfirst}
\usepackage{cases}

\usepackage{isomath}

\setlength{\topmargin}{-1.5cm}
\setlength{\oddsidemargin}{0.0cm}
\setlength{\evensidemargin}{0.0cm}
\setlength{\textwidth}{16.7cm}
\setlength{\textheight}{23cm}
\headheight 20pt
\headsep    26pt
\footskip 0.4in
\setlength{\arraycolsep}{0.1pt}

\setlength{\itemsep}{0pt}
\setlength{\parsep}{0pt}
\setlength{\parskip}{2pt}

\newtheorem{theorem}{Theorem}[section]
\newtheorem{definition}[theorem]{Definition}
\newtheorem{corollary}[theorem]{Corollary}

\newtheorem{remark}[theorem]{Remark}

\newcommand{\bgamma}{\boldsymbol{\gamma}}

\newcommand{\bkappa}{\boldsymbol{\kappa}}

\newcommand{\btau}{\boldsymbol{\tau}}

\newcommand{\bchi}{\boldsymbol{\chi}}

\newcommand{\bomega}{\boldsymbol{\omega}}


%
%
\newcommand{\bGamma}{\boldsymbol{\Gamma}}

\newcommand{\bOmega}{\boldsymbol{\Omega}}
%
%

\allowdisplaybreaks

\title[]{Computation of shape Taylor expansions} 

\numberwithin{equation}{section}

\author{Gang Bao}
\address{School of Mathematical Sciences, Zhejiang University,
	Hangzhou, Zhejiang 310027, China}
\email{baog@zju.edu.cn}

\author{Jun Lai}
\address{School of Mathematical Sciences, Zhejiang University,
	Hangzhou, Zhejiang 310027, China}
\email{laijun6@zju.edu.cn}

\author{Haoran Ma}
\address{School of Mathematical Sciences, Zhejiang University,
	Hangzhou, Zhejiang 310027, China}
\email{MaHaoran77@zju.edu.cn}


\subjclass[2020]{35J05, 30B20, 45A05, 49J50, 78M50}

\keywords{shape Taylor expansions, shape derivatives, random scattering, uncertainty quantification}


\begin{document}

\begin{abstract}
 Shape derivative is an important analytical tool for studying scattering problems involving perturbations in scatterers. Many applications, including inverse scattering, optimal design, and uncertainty quantification, are based on shape derivatives. However, computing high order shape derivatives is challenging due to the complexity of shape calculus. This work introduces a comprehensive method for computing shape Taylor expansions in two dimensions using recurrence formulas. The approach is developed under sound-soft, sound-hard, impedance, and transmission boundary conditions. Additionally, we apply the shape Taylor expansion to uncertainty quantification in wave scattering, enabling high order moment estimation for the scattered field under random boundary perturbations. Numerical examples are provided to illustrate the effectiveness of the shape Taylor expansion in achieving high order approximations.
\end{abstract}

\maketitle

\section{Introduction}\label{Introduction}

Shape derivative in scattering problems establishes a connection between variations in the scattered field and the perturbations in the shape of a scatterer. Over the past few decades, it has played an important role in various areas such as inverse scattering~\cite{bao2015inverse,bao2011imaging,fink2023domain,hagemann2019solving}, optimal design~\cite{bao2014optimal,ma2024inverse}, and uncertainty quantification~\cite{escapil2023shape,harbrecht2013first}. In this work, we consider a scattered field $u$ that depends smoothly on the perturbation velocity $\mathbf{v}$ of a scatterer $D$. Our focus is on the computation of the shape Taylor expansion of arbitrary order $N>0$, defined as: 
\begin{eqnarray}\label{shapetaylor}
    \mathbf{Taylor}(\mathbf{x};u,\mathbf{v},N):= u(\mathbf{x}) + \epsilon\delta_{\mathbf{v}}u(\mathbf{x}) + \frac{\epsilon^2}{2!}\delta_{[\mathbf{v},2]}u(\mathbf{x}) + ... + \frac{\epsilon^N}{N!}\delta_{[\mathbf{v},N]}u(\mathbf{x}).
\end{eqnarray}
where $\delta_{[\mathbf{v},N]}u$ denotes the $N$th order shape derivative of $u$. 

While the theory of first order shape derivative has been well established, there are only a few works on the higher order shape derivatives~\cite{hiptmair2018shape}. The main reason is due to the complex and tedious derivation process of the higher order shape calculus~\cite{dolz2020higher}. Most of the existing works are problem specific~\cite{hagemann2020application, hettlich1999second, novotny2019topological3} and  focus only on deriving derivatives up to the second order. For instance,~\cite{hagemann2020application} made use of the second order shape derivative to solve the inverse scattering problem of reconstructing a perfect electric conductor (PEC) from the far field pattern, and~\cite{dolz2020higher} employed the second order expansion to estimate the scattered field in electromagnetics with random shape variations. However, for many shape reconstruction and  optimization problems, higher order shape derivatives are required for more accurate approximations. 

 Since the underlying differential equation that governs the shape derivatives of wave scattering is the same, deriving shape derivatives of different orders reduces to determining the appropriate boundary conditions.  Motivated by~\cite{hiptmair2018shape}, in our recent work~\cite{bao2025shape}, we have successfully derived these boundary conditions for shape derivatives of arbitrary order and constructed an explicit formula for the shape Taylor expansion \eqref{shapetaylor}.  The derivation employed tools from exterior differential forms, Lie derivatives, and material derivatives. The work establishes a unified framework for computing the high order shape perturbations in scattering problems, and the recurrence formulas are applicable to both acoustic and electromagnetic scattering models under a variety of boundary conditions. In particular, we have shown the boundary conditions for shape derivatives of order $N$ can be recursively obtained from those of order $N-1$.  However, an effective numerical method for evaluating these high order shape derivatives remains to be developed.

In this work, based on the theoretical framework established in \cite{bao2025shape}, we propose an effective numerical computational method for evaluating shape derivatives of arbitrary order in two-dimensional acoustic scattering problems, under sound-soft, sound-hard, impedance, and transmission boundary conditions. The numerical method is grounded in potential theory and boundary integral equations, enabling the construction of shape Taylor expansion \eqref{shapetaylor} numerically.  Using this numerical tool, we apply the shape Taylor expansion to uncertainty quantification in wave scattering problems, especially for estimating the statistical moments of the scattered field under random boundary perturbations~\cite{dolz2020higher,harbrecht2018second,henriquez2021shape}. It is worth mentioning that the first order shape expansion is commonly employed in the uncertainty quantification of small domain perturbations~\cite{hao2018computation,harbrecht2013first}. However, when the perturbation is slightly larger, the first order expansion becomes less effective~\cite{dolz2020higher}. We will show that the high order shape Taylor expansion can significantly improve the accuracy of the estimations.

The remainder of this paper is organized as follows. In Section~\ref{Shapecalculus}, we introduce the basic wave scattering formulation and recall the recurrence formulas for the shape derivatives of arbitrary order from \cite{bao2025shape}. Section~\ref{ShapeTaylor} details the numerical implementation of shape Taylor expansion for two-dimensional scattering problems. In particular, we present the explicit forms of the recurrence relations under four different boundary conditions, and then derive a recurrence formula for computing normal derivatives of arbitrary order of the scattered field. Section~\ref{Randomproblem} presents the main results for the moment estimation of the scattered field under random boundary perturbations. Section~\ref{Numerical} gives numerical examples to demonstrate the high order approximation property of the shape Taylor expansion. Finally, Section~\ref{Conclusion} concludes the paper.

\section{Recurrences of shape derivatives}\label{Shapecalculus}
Consider a bounded, simply connected open set $D\subset\mathbb{R}^2$ as the domain of an acoustic scatterer, with a smooth boundary $\bGamma:=\partial D$. Suppose the scatterer is illuminated by a time-harmonic incident wave $\phi(\mathbf{x})$. In the case of impenetrable boundaries, including sound-soft, sound-hard, and impedance boundaries, the scattered field $u$ is defined in the exterior domain $ D_{\rm ex}:=\mathbb{R}^2\backslash\Bar{D}$, and the total field $u_{\rm tot}$ is given by the superposition of the incident field $\phi$ and the scattered field $u$, i.e.,
\begin{eqnarray}
    u_{\rm tot}(\mathbf{x},\bGamma): = \phi(\mathbf{x}) + u(\mathbf{x},\bGamma).
\end{eqnarray}
Here, we use $u(\mathbf{x},\bGamma)$ to emphasize the dependence of the scattered field on the boundary $\bGamma$, and abbreviate it as $u(\mathbf{x})$ when no ambiguity arises. The total field $u_{\rm tot}$ satisfies the Helmholtz equation
\begin{eqnarray}\label{scattering_eqn2}
    \nabla\cdot\alpha\nabla u_{\rm tot} + k^2 u_{\rm tot} = 0, \quad{\rm in}\quad D_{\rm ex},
\end{eqnarray}
with one of the following boundary conditions:
\begin{eqnarray}\label{scattering_bound2}
    \begin{aligned}
        &{\rm sound-soft:}&&u_{\rm tot}|_{\bGamma} = 0,\\
        &{\rm sound-hard:}&&\mathbf{n}\cdot\alpha\nabla u_{\rm tot}|_{\bGamma} = 0,\\
        &{\rm impedance:}&&\left(\mathbf{n}\cdot\alpha\nabla u_{\rm tot} + i\lambda u_{\rm tot}\right)|_{\bGamma} = 0,\\
    \end{aligned}
\end{eqnarray}
where  $\mathbf{n}$ is the unit outward normal to $\bGamma$, $\alpha$ is the medium parameter, and $\lambda$ is the impedance coefficient, both of which are assumed to be constant. In the case of a penetrable scatterer, the total field $u_{\rm tot}$ is defined in $D\cup D_{\rm ex}$ with $u_{\rm tot} = u$ in $D$. It satisfies the following transmission problem:
\begin{eqnarray}\label{scattering_eqn1}
\left\{
    \begin{aligned}
        &\nabla\cdot\alpha\nabla u_{\rm tot} + k^2 u_{\rm tot} = 0, \quad{\rm in}\quad D\cup D_{\rm ex},\\
        &\left[u_{\rm tot}\right]_{\bGamma} = 0,\\
         &\left[\mathbf{n}\cdot\alpha\nabla u_{\rm tot}\right]_{\bGamma} = 0,
    \end{aligned}
\right.
\end{eqnarray}
where $[\cdot]_{\bGamma}$ represents the jump across $\bGamma$. 

When $r = |\mathbf{x}|$ approaches infinity, in both impenetrable and penetrable cases, the scattered field $u$ satisfies the Sommerfeld radiation condition:
\begin{eqnarray}\label{rad_cond}
        \lim_{r\rightarrow\infty}\sqrt{r}\Big(\frac{\partial u}{\partial r} - iku\Big) = 0.
\end{eqnarray}

  Assume that the boundary of $D$ is perturbed by a collection of velocity fields $\mathbf{v}_{[m]}$, where $\mathbf{v}_{[m]}:=[\mathbf{v}_1,\mathbf{v}_2,...,\mathbf{v}_m]$ consists of $m$ velocity fields, with each $\mathbf{v}_j\in C^{\infty}(\mathbb{R}^2,\mathbb{R}^2)$. We assume each velocity field has a compact support, confined to a band-like region containing the boundary $\bGamma$~\cite{sokolowski1992introduction}. Let $\epsilon_{[m]}: = [\epsilon_1,\epsilon_2,...,\epsilon_m]$ be $m$ independent perturbation parameters.
  The perturbed boundary $\bGamma_{\rm per}$ is given by
\begin{eqnarray}\label{Gammav}
    \bGamma_{\rm per} = \bGamma + \sum_{j = 1}^{m}\epsilon_j\mathbf{v}_j.
\end{eqnarray}
The corresponding perturbed field $u(\mathbf{x},\bGamma_{\rm per})$ can be approximated by
\begin{eqnarray}\label{shapeTaylorMultiV}
    \begin{aligned}
        u(\mathbf{x},\bGamma_{\rm per}) \approx& u(\mathbf{x}) + \sum_{i = 1}^m\epsilon_i\delta_{\mathbf{v}_i}u(\mathbf{x}) + \frac{1}{2!}\sum_{i,j=1}^m\epsilon_i\epsilon_j\delta_{[\mathbf{v}_i,\mathbf{v}_j]}u(\mathbf{x})\\
        &+ ... + \frac{1}{N!}\sum_{i_1,...,i_N = 1}^m\left(\prod \limits_{j=1}^N\epsilon_{i_j}\right)\delta_{[\mathbf{v}_{i_1},...,\mathbf{v}_{i_N}]}u(\mathbf{x}),
    \end{aligned}
\end{eqnarray}
where $\delta_{[\mathbf{v}_{i_1},...,\mathbf{v}_{i_N}]}u$ for $N = 0,1,2,\dots$ is the $N$th order shape derivative with respect to (w.r.t.) velocity fields $\mathbf{v}_{i_1},...,\mathbf{v}_{i_N}$. Specifically, $u$ is the $0$th order shape derivative, and $\delta_{\mathbf{v}_j}u$ with $j = 1,2,\dots m$ are the first order shape derivatives. The right-hand side of equation~\eqref{shapeTaylorMultiV} is the shape Taylor expansion  of the scattered field $u$. For the special case of $m = 1$,  the expansion is given by equation~\eqref{shapetaylor}. Therefore, the computation of shape Taylor expansion is equivalent to computing the shape derivatives of each order. Readers are referred to~\cite{hiptmair2018shape} for the first order shape derivatives under the boundary conditions specified by equations~\eqref{scattering_bound2} and~\eqref{scattering_eqn1}.  Here we give the formulas for shape derivatives of arbitrary order. 

We begin by defining the normal Dirichlet trace and Neumann trace of a field $u$ as
\begin{eqnarray}
    \mathbf{Tr}^\mathcal{D}(u): = \mathbf{n}u|_{\bGamma},\qquad\mathbf{Tr}^\mathcal{N}(u): = \mathbf{n}(\mathbf{n}\cdot\alpha\nabla u)|_{\bGamma}.
\end{eqnarray}
We introduce two shape derivative operators, denoted as $\delta_{\mathbf{v}}^{u}$ and $\delta_{\mathbf{v}}^{\mathbf{n}}$, to represent the shape differential on the field $u$ and the normal vector $\mathbf{n}$, respectively. They are given in the form of
\begin{eqnarray}
    \delta_{\mathbf{v}}^{u}(\mathbf{n}u|_{\bGamma}) = \mathbf{n}\delta_{\mathbf{v}}u|_{\bGamma},\qquad \delta_{\mathbf{v}}^{\mathbf{n}}(\mathbf{n}u|_{\bGamma}) = \delta_{\mathbf{v}}\mathbf{n}u|_{\bGamma}.
\end{eqnarray}
Based on these notations, the following two theorems establish the recursive relations for the shape derivatives in acoustic scattering problems~\cite{bao2025shape}.

\begin{theorem}[Impenetrable cases]\label{thmImpenetrable}
    Let $u_{\rm tot}$ be the solution of~\eqref{scattering_eqn2} and $u$ be the corresponding scattered field. Suppose the $N$th order shape derivative of $u$ w.r.t. $\mathbf{v}_{[N]}$ satisfies
    \begin{eqnarray}\label{eq_impen}
        \nabla\cdot\alpha\nabla\delta_{\mathbf{v}_{[N]}} u + k^2\delta_{\mathbf{v}_{[N]}} u = 0, \quad{\rm in}\quad D_{\rm ex},
    \end{eqnarray}
    with boundary conditions on $\bGamma$ given by one of the following:
    \begin{subequations}\label{bound_impen}
        \begin{align}
            &{\rm Sound-soft:}&&\delta_{\mathbf{v}_{[N]}} u|_{\bGamma} = \mathbf{n}\cdot\mathbf{Tr}^{\mathcal{D}}(\delta_{\mathbf{v}_{[N]}}u),\label{bound_impen1}\\
            &{\rm Sound-hard:}&&\mathbf{n}(\mathbf{n}\cdot\alpha\nabla \delta_{\mathbf{v}_{[N]}} u)|_{\bGamma} = \mathbf{Tr}^{\mathcal{N}}(\delta_{\mathbf{v}_{[N]}}u),\label{bound_impen2}\\
            &{\rm Impedance:}&&\mathbf{n}(\mathbf{n}\cdot\alpha\nabla \delta_{\mathbf{v}_{[N]}} u + i\lambda \delta_{\mathbf{v}_{[N]}} u)|_{\bGamma}\label{bound_impen3} \\&&&=\mathbf{Tr}^{\mathcal{N}}(\delta_{\mathbf{v}_{[N]}}u,\mathbf{n}) + i\lambda\mathbf{Tr}^{\mathcal{D}}(\delta_{\mathbf{v}_{[N]}}u),\notag
        \end{align}
    \end{subequations}
    where $\delta_{\mathbf{v}_{[N]}}u = u$ for $N=0$. Then the $N+1$th order shape derivative $\delta_{\mathbf{v}_{[N+1]}}u$ also satisfies equation~\eqref{eq_impen} and radiation condition \eqref{rad_cond}. For the boundary condition on $\bGamma$, under the sound-soft case, it is
    \begin{eqnarray}\label{bound_diri}
    \begin{aligned}
        \delta_{\mathbf{v}_{[N+1]}} u|_{\bGamma} =& - \mathbf{v}_{N+1}\cdot\nabla\delta_{\mathbf{v}_{[N]}}u + \mathbf{v}_{N+1}\cdot\nabla\mathbf{n}\cdot\mathbf{Tr}^\mathcal{D}(\delta_{\mathbf{v}_{[N]}}u)\\
        &+\delta_{\mathbf{v}_{N+1}}^u\left( \mathbf{n}\cdot\mathbf{Tr}^\mathcal{D}(\delta_{\mathbf{v}_{[N]}}u)\right).
    \end{aligned}
    \end{eqnarray}
    Under the sound-hard case, it is
    \begin{eqnarray}\label{bound_neum}
        \begin{aligned}
            \mathbf{n}(\mathbf{n}\cdot\alpha\nabla \delta_{\mathbf{v}_{[N+1]}} u)|_{\bGamma} =&-\mathbf{v}_{N+1}\nabla\cdot\mathbf{n}(\mathbf{n}\cdot\alpha\nabla \delta_{\mathbf{v}_{[N]}} u) - \delta_{\mathbf{v}_{N+1}}^{\mathbf{n}}\left(\mathbf{n}(\mathbf{n}\cdot\alpha\nabla \delta_{\mathbf{v}_{[N]}} u)\right)\\
         & + \mathbf{v}_{N+1}\nabla\cdot\mathbf{Tr}^\mathcal{N}(\delta_{\mathbf{v}_{[N]}}u) + \delta_{\mathbf{v}_{N+1}}^{\mathbf{n}}\left(\mathbf{Tr}^\mathcal{N}(\delta_{\mathbf{v}_{[N]}}u)\right)\\
         & + \delta_{\mathbf{v}_{N+1}}^{u}\left(\mathbf{Tr}^\mathcal{N}(\delta_{\mathbf{v}_{[N]}}u)\right).
        \end{aligned}
    \end{eqnarray}
    Under the impedance case, it is
    \begin{eqnarray}\label{bound_impe}
        \begin{aligned}
            &\mathbf{n}(\mathbf{n}\cdot\alpha\nabla \delta_{\mathbf{v}_{[N+1]}} u + i\lambda \delta_{\mathbf{v}_{[N+1]}} u)|_{\bGamma} \\
            =& -\mathbf{v}_{N+1}\nabla\cdot\left(\mathbf{n}(\mathbf{n}\cdot\alpha\nabla \delta_{\mathbf{v}_{[N]}}u + i\lambda \delta_{\mathbf{v}_{[N]}} u)\right)\\
            &- \delta_{\mathbf{v}_{N+1}}^\mathbf{n}\left(\mathbf{n}(\mathbf{n}\cdot\alpha\nabla\delta_{\mathbf{v}_{[N]}}u+i\lambda\delta_{\mathbf{v}_{[N+1]}}u)\right)\\
            & + \mathbf{v}_{N+1}\nabla\cdot\left(\mathbf{Tr}^{\mathcal{N}}(\delta_{\mathbf{v}_{[N]}}u) + i\lambda\mathbf{Tr}^{\mathcal{D}}(\delta_{\mathbf{v}_{[N]}}u)\right)\\
            & + \delta_{\mathbf{v}_{N+1}}^{u}\left(\mathbf{Tr}^{\mathcal{N}}(\delta_{\mathbf{v}_{[N]}}u)+ i\lambda\mathbf{Tr}^{\mathcal{D}}(\delta_{\mathbf{v}_{[N]}}u)\right) + \delta_{\mathbf{v}_{N+1}}^\mathbf{n}\left(\mathbf{Tr}^{\mathcal{N}}(\delta_{\mathbf{v}_{[N]}}u)\right).
        \end{aligned}
    \end{eqnarray}
\end{theorem}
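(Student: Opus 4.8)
The statement is the inductive step of a recurrence, so the plan is to assume the order-$N$ relations \eqref{eq_impen}--\eqref{bound_impen3} and generate the order-$(N+1)$ relations by differentiating once more along the flow $T_t=\mathrm{id}+t\,\mathbf{v}_{N+1}$. The engine is the standard dictionary of shape calculus: the material (Lagrangian) derivative $\dot f$ and the shape (Eulerian) derivative are tied by $\dot f=\delta_{\mathbf{v}_{N+1}}f+\mathbf{v}_{N+1}\cdot\nabla f$, so that $\dot{(\delta_{\mathbf{v}_{[N]}}u)}=\delta_{\mathbf{v}_{[N+1]}}u+\mathbf{v}_{N+1}\cdot\nabla\delta_{\mathbf{v}_{[N]}}u$; the shape derivative commutes with the constant-coefficient spatial operators away from the moving interface; and $|\mathbf{n}|=1$ forces the structural identities $\mathbf{n}\cdot\delta_{\mathbf{v}_{N+1}}\mathbf{n}=0$ and $\mathbf{n}\cdot(\mathbf{v}_{N+1}\cdot\nabla\mathbf{n})=0$, which collapse several of the terms that arise.

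The volume equation and the radiation condition are the routine part. Since $\alpha$ and $k$ are constant and each velocity field is supported in a fixed band around $\bGamma$, differentiating \eqref{eq_impen} and passing $\delta_{\mathbf{v}_{N+1}}$ through $\nabla\cdot\alpha\nabla+k^2$ in the interior of $D_{\rm ex}$ shows $\delta_{\mathbf{v}_{[N+1]}}u$ again solves \eqref{eq_impen}; outside the support of $\mathbf{v}_{N+1}$ the geometry is unperturbed, so the outgoing asymptotics are inherited and \eqref{rad_cond} propagates. This reduces the theorem to transporting the boundary data.

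For the boundary relations I would differentiate the order-$N$ trace identity along $\mathbf{v}_{N+1}$ (equivalently, take one further material derivative of the total-field condition on the moving $\bGamma$) and solve for the order-$(N+1)$ trace. In the sound-soft case the left-hand side produces $\delta_{\mathbf{v}_{[N+1]}}u|_{\bGamma}$ together with the convective term $-\mathbf{v}_{N+1}\cdot\nabla\delta_{\mathbf{v}_{[N]}}u$ of \eqref{bound_diri}; on the right I would apply the product rule to the packaged datum $\mathbf{n}\cdot\mathbf{Tr}^{\mathcal{D}}(\delta_{\mathbf{v}_{[N]}}u)$, so that differentiating the explicit normal factor yields the $\delta^{\mathbf{n}}_{\mathbf{v}_{N+1}}$ and $\mathbf{v}_{N+1}\cdot\nabla\mathbf{n}$ contributions while differentiating the field factor yields $\delta^{u}_{\mathbf{v}_{N+1}}$, the structural identities above removing the normal-parallel pieces. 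Collecting the survivors gives \eqref{bound_diri}.

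The sound-hard and impedance cases follow the same template, but they are where the real difficulty sits, and I expect the Neumann trace to be the main obstacle. The datum $\mathbf{Tr}^{\mathcal{N}}(u)=\mathbf{n}(\mathbf{n}\cdot\alpha\nabla u)|_{\bGamma}$ carries the normal in two slots and a normal derivative, so its transport no longer simplifies by orthogonality: differentiating it produces a genuine $\delta^{\mathbf{n}}_{\mathbf{v}_{N+1}}$ term, a $\delta^{u}_{\mathbf{v}_{N+1}}$ term, and---because a Neumann datum is a flux rather than a value---transport contributions that appear in the divergence form $\mathbf{v}_{N+1}\nabla\cdot(\cdot)$ of \eqref{bound_neum}. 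Keeping track of both occurrences of $\mathbf{n}$ and of the surface-geometric (Weingarten/tangential) terms generated by differentiating $\mathbf{n}\cdot\nabla$, and sorting them cleanly into the $\delta^{u}$, $\delta^{\mathbf{n}}$ and transport pieces, is exactly the bookkeeping that the exterior-calculus and Lie-derivative formalism of \cite{bao2025shape} is designed to automate (through Cartan's identity $\mathcal{L}_{\mathbf{v}}=d\,\iota_{\mathbf{v}}+\iota_{\mathbf{v}}\,d$ applied to the forms representing the traces). Finally, because the impedance datum is the Neumann datum plus $i\lambda$ times the Dirichlet datum and every operation used is linear, \eqref{bound_impe} follows by superposing the sound-soft and sound-hard computations.
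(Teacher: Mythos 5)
The paper does not actually prove Theorem~\ref{thmImpenetrable}: it is imported verbatim from the companion work \cite{bao2025shape}, and the only methodological information given here is that the derivation there uses exterior differential forms, Lie derivatives, and material derivatives. Measured against that described route, your outline is aligned for the easy parts --- the volume equation (constant coefficients, so $\delta_{\mathbf{v}_{N+1}}$ commutes with $\nabla\cdot\alpha\nabla+k^2$ away from the interface), the radiation condition (compactly supported velocity fields), and the sound-soft recurrence \eqref{bound_diri}, where the material/shape-derivative dictionary $\dot f=\delta_{\mathbf{v}_{N+1}}f+\mathbf{v}_{N+1}\cdot\nabla f$ plus the product rule on $\mathbf{n}\cdot\mathbf{Tr}^{\mathcal{D}}(\delta_{\mathbf{v}_{[N]}}u)$ does reproduce the stated terms. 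But the actual content of the theorem is the sound-hard and impedance recurrences, and there your proposal describes the computation rather than performing it. The crux is exactly the point you flag and then defer to \cite{bao2025shape}: why the transport of the Neumann datum enters as the divergence form $\mathbf{v}_{N+1}\nabla\cdot(\,\cdot\,)$ --- which is what generates the curvature term $\bkappa$ visible in \eqref{bound_neum_11} --- and not as the convective form $\mathbf{v}_{N+1}\cdot\nabla$. This rests on the Neumann datum being the trace of a flux ($(d-1)$-form) whose Lie derivative is computed via Cartan's formula; invoking that the cited formalism ``is designed to automate'' this is not a proof of \eqref{bound_neum}.

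There is also a concrete error in your treatment of the impedance case. You claim \eqref{bound_impe} ``follows by superposing the sound-soft and sound-hard computations,'' but in \eqref{bound_impe} the \emph{entire} impedance datum $\mathbf{n}(\mathbf{n}\cdot\alpha\nabla u+i\lambda u)$, including the zeroth-order piece $i\lambda u$, is transported in divergence form $-\mathbf{v}_{N+1}\nabla\cdot(\,\cdot\,)$; this produces a curvature contribution $-\mathbf{v}\bkappa\, i\lambda\, \delta_{\mathbf{v}_{[N]}}u$, which is explicit in the paper's own computation \eqref{boundimp1}. Superposing the sound-soft transport ($-\mathbf{v}\cdot\nabla$, which carries no such curvature term) for the $i\lambda$ part with the sound-hard transport for the flux part would omit exactly this term; it happens to vanish at order $N=0$ because the impedance condition annihilates the datum on $\bGamma$, but not at higher orders, where the order-$N$ datum is generally nonzero. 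The impedance datum must be treated as a single flux-type object throughout, so the linear-superposition shortcut does not close the induction.
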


\begin{theorem}[Penetrable case]\label{thmPenetrable}
    Let $u$ be the scattered field with $u_{\rm tot}$ satisfying equation ~\eqref{scattering_eqn1}, and $\delta_{\mathbf{v}_{[N]}} u$ be the $N$th order shape derivative w.r.t. $\mathbf{v}_{[N]}$ for $N = 0,1,\dots$, where $\delta_{\mathbf{v}_{[N]}} u = u$ for $N = 0$. Suppose $\delta_{\mathbf{v}_{[N]}} u$ satisfies
\begin{eqnarray}\label{Traneq1}
\left\{
    \begin{aligned}
        &\nabla\cdot\alpha\nabla\delta_{\mathbf{v}_{[N]}}u + k^2\delta_{\mathbf{v}_{[N]}}u = 0, \quad{\rm in}\quad D\cup D_{\rm ex},\\
        &\left[\delta_{\mathbf{v}_{[N]}}u\right]_{\bGamma} = \left[\mathbf{n}\cdot\mathbf{Tr}^\mathcal{D}(\delta_{\mathbf{v}_{[N]}}u)\right]_{\bGamma},\\
        &\left[\mathbf{n}(\mathbf{n}\cdot\alpha\nabla \delta_{\mathbf{v}_{[N]}} u)\right]_{\bGamma} = \left[\mathbf{Tr}^\mathcal{N}(\delta_{\mathbf{v}_{[N]}}u)\right]_{\bGamma}.
    \end{aligned}
    \right.
\end{eqnarray}
Then the $N+1$th order shape derivative $\delta_{\mathbf{v}_{[N+1]}}u$ satisfies the Helmholtz equation with transmission condition on $\bGamma$ given by
\begin{eqnarray}\label{bound_pen}
    \begin{cases}
        &\left[\delta_{\mathbf{v}_{[N+1]}}u\right]_{\bGamma}= - \mathbf{v}_{N+1}\cdot\nabla\left[\delta_{\mathbf{v}_{[N]}}u\right]_{\bGamma}\\
        &+ \mathbf{v}_{N+1}\cdot\nabla\left[\mathbf{n}\cdot\mathbf{Tr}^\mathcal{D}(\delta_{\mathbf{v}_{[N]}}u)\right]_{\bGamma}+ \delta_{\mathbf{v}_{N+1}}^{u}\left[\mathbf{n}\cdot\mathbf{Tr}^\mathcal{D}(\delta_{\mathbf{v}_{[N]}}u)\right]_{\bGamma},\\
         &\left[\mathbf{n}(\mathbf{n}\cdot\alpha\nabla \delta_{\mathbf{v}_{[N+1]}} u)\right]_{\bGamma}=-\mathbf{v}_{N+1}\nabla\cdot\left[\mathbf{n}(\mathbf{n}\cdot\alpha\nabla \delta_{\mathbf{v}_{[N]}} u)\right]_{\bGamma}\\
         &-\left[\mathbf{n}(\mathbf{n}\cdot\alpha\nabla \delta_{\mathbf{v}_{[N]}} u)\right]_{\bGamma} + \mathbf{v}_{N+1}\nabla\cdot\left[\mathbf{Tr}^\mathcal{N}(\delta_{\mathbf{v}_{[N]}}u)\right]_{\bGamma}\\
         &+ \delta_{\mathbf{v}_{N+1}}^{u}\left[\mathbf{Tr}^\mathcal{N}(\delta_{\mathbf{v}_{[N]}}u)\right]_{\bGamma}+\delta_{\mathbf{v}_{N+1}}^{\mathbf{n}}\left[\mathbf{Tr}^\mathcal{N}(\delta_{\mathbf{v}_{[N]}}u)\right]_{\bGamma}.
    \end{cases}
\end{eqnarray}
\end{theorem}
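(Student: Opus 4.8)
The statement is the inductive step of the recursion, so I would prove the implication directly: assuming the order-$N$ relations \eqref{Traneq1}, I derive the Helmholtz equation and the interface conditions \eqref{bound_pen} for the order-$(N+1)$ derivative. The argument mirrors the one-sided computation behind Theorem~\ref{thmImpenetrable}, now carried out on each side of $\bGamma$ with the jump bracket $[\,\cdot\,]_\bGamma$ retained throughout. I would first settle the interior equation: because $\alpha$ and $k$ are constant and every $\mathbf{v}_j$ is supported in a thin band around $\bGamma$, the shape derivative $\delta_{\mathbf{v}_{N+1}}$ commutes with $\nabla\cdot\alpha\nabla+k^2$ in the open sets $D$ and $D_{\rm ex}$. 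Applying $\delta_{\mathbf{v}_{N+1}}$ to the Helmholtz equation in \eqref{Traneq1} then shows at once that $\delta_{\mathbf{v}_{[N+1]}}u$ solves the same equation in $D\cup D_{\rm ex}$, and the Sommerfeld condition \eqref{rad_cond} survives because differentiation in the compactly supported $\mathbf{v}_{N+1}$ leaves the far field untouched.

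The substance lies in differentiating the two interface relations. My main structural observation is that, since $\mathbf{v}_{N+1}\in C^\infty(\mathbb{R}^2,\mathbb{R}^2)$ is globally defined and the interface geometry (the normal $\mathbf{n}$, the tangential gradient, and the surface measure) is common to both sides, the operator $\delta_{\mathbf{v}_{N+1}}$ commutes with the jump bracket. I can therefore run the same local computation as in the impenetrable case and simply restore the bracket at the end. The engine of that computation is the decomposition of the shape derivative of a boundary trace into a material part and a transport part, $\delta_{\mathbf{v}_{N+1}}(g|_\bGamma)=\dot g|_\bGamma-\mathbf{v}_{N+1}\cdot\nabla g$, together with the splitting $\delta_{\mathbf{v}_{N+1}}=\delta^{u}_{\mathbf{v}_{N+1}}+\delta^{\mathbf{n}}_{\mathbf{v}_{N+1}}$ on trace products. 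For the Dirichlet jump this reproduces, bracketed, the sound-soft recursion \eqref{bound_diri}: the transport term $-\mathbf{v}_{N+1}\cdot\nabla[\,\cdot\,]_\bGamma$, the term from transporting $\mathbf{n}\cdot\mathbf{Tr}^{\mathcal D}$, and the $\delta^{u}_{\mathbf{v}_{N+1}}$ contribution, with the $\delta^{\mathbf{n}}$ piece dropping out because $\mathbf{n}\cdot\mathbf{Tr}^{\mathcal D}(w)=w|_\bGamma$ is scalar and insensitive to first-order rotation of $\mathbf{n}$.

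The conormal (Neumann) jump is where I expect the real work. Here the datum $\mathbf{Tr}^{\mathcal N}(w)=\mathbf{n}(\mathbf{n}\cdot\alpha\nabla w)|_\bGamma$ is vector valued and genuinely feels the motion of the interface, so I would represent it as the pullback to $\bGamma$ of an appropriate $1$-form and apply Cartan's formula $\mathcal{L}_{\mathbf{v}_{N+1}}=d\,\iota_{\mathbf{v}_{N+1}}+\iota_{\mathbf{v}_{N+1}}\,d$. The $\iota_{\mathbf{v}_{N+1}}d$ piece produces the surface-divergence transport terms $\mathbf{v}_{N+1}\nabla\cdot[\,\cdot\,]_\bGamma$, the rotation of the normal produces the $\delta^{\mathbf{n}}_{\mathbf{v}_{N+1}}$ term acting on $\mathbf{Tr}^{\mathcal N}$, and differentiation of the field yields the $\delta^{u}_{\mathbf{v}_{N+1}}$ term; isolating the $\delta^u$ contribution of the order-$N$ left-hand side as the new unknown $[\mathbf{n}(\mathbf{n}\cdot\alpha\nabla\delta_{\mathbf{v}_{[N+1]}}u)]_\bGamma$ and moving everything else across gives \eqref{bound_pen}. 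The delicate point, and the one that distinguishes the penetrable formula from its sound-hard analogue \eqref{bound_neum}, is keeping track of the zeroth-order (purely algebraic) part of the Lie derivative of the conormal $1$-form together with the nonvanishing jump $[\mathbf{n}(\mathbf{n}\cdot\alpha\nabla\delta_{\mathbf{v}_{[N]}}u)]_\bGamma$ supplied by the inductive hypothesis; this is the source of the extra summand absent in the one-sided setting. Once these contributions are assembled and the bracket is pulled back outside, the two interface identities close the induction.
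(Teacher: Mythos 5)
The paper itself contains no proof of Theorem~\ref{thmPenetrable}: both recursion theorems are imported verbatim from \cite{bao2025shape}, and the present text only records that the derivation there ``employed tools from exterior differential forms, Lie derivatives, and material derivatives.'' Your outline is therefore aligned with the advertised methodology --- commuting $\delta_{\mathbf{v}_{N+1}}$ with the Helmholtz operator away from $\bGamma$, splitting boundary traces into material and transport parts, and treating the conormal datum via Cartan's formula is exactly the toolkit the authors point to, and your observation that the penetrable case is the two-sided, bracketed version of the impenetrable computation (with terms surviving that vanish in the sound-hard case because $\partial_{\mathbf{n}}u_{\rm tot}|_{\bGamma}=0$ there) is the right structural picture. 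As a proof, however, it remains a strategy: you never carry the Lie-derivative computation far enough to confirm that the five summands of \eqref{bound_pen} are exactly what comes out, which is where all the content of the theorem lives.

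The one place where your argument is not merely incomplete but goes wrong is your treatment of the summand $-\left[\mathbf{n}(\mathbf{n}\cdot\alpha\nabla \delta_{\mathbf{v}_{[N]}} u)\right]_{\bGamma}$ in the conormal jump condition. A first-order shape derivative is, by construction, linear and homogeneous of degree one in the perturbation velocity $\mathbf{v}_{N+1}$; every term produced by $\mathcal{L}_{\mathbf{v}_{N+1}}=d\,\iota_{\mathbf{v}_{N+1}}+\iota_{\mathbf{v}_{N+1}}\,d$, by the transport term $-\mathbf{v}_{N+1}\cdot\nabla(\cdot)$, and by $\delta^{u}_{\mathbf{v}_{N+1}}$ or $\delta^{\mathbf{n}}_{\mathbf{v}_{N+1}}$ carries a factor of $\mathbf{v}_{N+1}$ or of $\delta_{\mathbf{v}_{N+1}}\mathbf{n}=-\dot v_{N+1}\btau$. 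There is no ``zeroth-order, purely algebraic part of the Lie derivative'' that could yield a velocity-independent contribution, so your proposed mechanism for that term cannot be correct. Comparing with the sound-hard recursion \eqref{bound_neum}, whose second summand is $-\delta^{\mathbf{n}}_{\mathbf{v}_{N+1}}\left(\mathbf{n}(\mathbf{n}\cdot\alpha\nabla\delta_{\mathbf{v}_{[N]}}u)\right)$, the bracketed analogue should read $-\delta^{\mathbf{n}}_{\mathbf{v}_{N+1}}\left[\mathbf{n}(\mathbf{n}\cdot\alpha\nabla\delta_{\mathbf{v}_{[N]}}u)\right]_{\bGamma}$; the printed form is almost certainly a typographical slip, and the correct move is to flag and repair it rather than to invent a justification for the formula as written. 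With that term restored, your two-sided repetition of the sound-hard computation (keeping the $\delta_{\mathbf{v}}\mathbf{n}\,\alpha\partial_{\mathbf{n}}(\cdot)$ contribution, which no longer vanishes because the conormal jump of the data is nonzero) should indeed close the induction, but that verification still needs to be written out.
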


In both impenetrable and penetrable cases, when $N\ge 1$, it is worth noting that the shape derivative of the total field $u_{\rm tot}$ is the same as the shape derivative of the scattered field $u$, because the incident field $\phi$ is independent of the shape of the scatterer. We will repeatedly use this fact in the following sections.

\section{Computation of the shape Taylor expansion}\label{ShapeTaylor} 

This section illustrates how to compute the shape derivatives of arbitrary order that constitute the shape Taylor expansion. In general, there are two types of incident waves. One is the plane wave given in the form of:
\begin{subequations}\label{incwave1}
    \begin{align}
        &\phi_{pl}(\mathbf{x},\mathbf{z}) = \exp{(ik\mathbf{x}\cdot\mathbf{z})} &&{\rm on}\quad\bGamma,\quad {\rm Dirichlet\ data},\label{scatteringsf}\\
        &\frac{\partial \phi_{pl}}{\partial\mathbf{n}} = ik(\mathbf{z}\cdot\mathbf{n})\exp(ik\mathbf{x}\cdot\mathbf{z}) &&{\rm on}\quad\bGamma,\quad {\rm Neumann\ data},\label{scatteringsh}
    \end{align}
\end{subequations}
where $\mathbf{z}$ is the incident direction. Another one is the point source given in the form of: 
\begin{subequations}\label{incwave2}
    \begin{align}
    \phi_{pt}(\mathbf{x},\mathbf{x}_{\rm s}) &= \frac{i}{4}H^{(1)}_0(k|\mathbf{x} - \mathbf{x}_{\rm s}|)&&{\rm on}\quad\bGamma,\quad {\rm Dirichlet\ data},\\
    \frac{\partial \phi_{pt}}{\partial\mathbf{n}}(\mathbf{x},\mathbf{x}_{\rm s}) &= -\frac{ik(\mathbf{x} - \mathbf{x}_{\rm s})\cdot\mathbf{n}}{4|\mathbf{x} - \mathbf{x}_{\rm s}|}H^{(1)}_1(k|\mathbf{x} - \mathbf{x}_{\rm s}|) &&{\rm on}\quad\bGamma,\quad {\rm Neumann\ data},
    \end{align}
\end{subequations}
 where $\mathbf{x}_{\rm s}$ is the source point and $H^{(1)}_j(\cdot)$ is the first kind Hankel function of order $j$, with $j=0,1$.  
 
 Let $\bGamma$ be a smooth closed curve in $\mathbb{R}^2$ with a perimeter of $L$. Denote $\bgamma:[0,L]\rightarrow\bGamma, s\mapsto\bgamma(s)$  the boundary parametrization mapping the arc length parameter $s$ to $\bGamma$. Let $\partial_{\btau}$ and $\partial_\mathbf{n}$ be the tangential and normal differential operators on $\bGamma$, respectively. We use the boundary integral equation method to solve the scattering problems \eqref{scattering_eqn2} and \eqref{scattering_eqn1}. Let $\rho: \bGamma\rightarrow\mathbb{C}$ be a density function defined on $\bGamma$. Recall that the Green's function for the Helmholtz equation in two dimensions is given by $\mathbf{G}(\mathbf{x},\mathbf{y})=\phi_{pt}(\mathbf{x},\mathbf{y})$. The single and double layer potential operators acting on $\rho$ are defined as
\begin{subequations}\label{definition12}
\begin{align}
    \big(\mathbf{S}\rho\big)(\mathbf{x}) &:= \int_{\bGamma}\mathbf{G}(\mathbf{x},\bgamma(s))\rho(s)ds, \quad\mathbf{x}\in\mathbb{R}^2\backslash\bGamma,\\
    \big(\mathbf{D}\rho\big)(\mathbf{x}) &:= \int_{\bGamma}\frac{\partial\mathbf{G}(\mathbf{x},\bgamma(s))}{\partial\mathbf{n}(s)}\rho(s)ds,\quad\mathbf{x}\in\mathbb{R}^2\backslash\bGamma,\label{doublelayer}
\end{align}
\end{subequations}
Let $\mathbf{x} = \bgamma(t)$ and $\mathbf{y}=\bgamma(s)$ with $t,s\in[0,L]$ when $\mathbf{x}$ and $\mathbf{y}$ are both on $\bGamma$, in which case $\mathbf{G}(\mathbf{x},\mathbf{y})$ can be read as $\mathbf{G}(t,s)$ without ambiguity. The corresponding single and double layer boundary operators are given by
\begin{eqnarray}
    \left(\mathcal{S}\rho\right)(t):=\int_{\bGamma}\mathbf{G}(t,s)\rho(s)ds,\qquad
    \left(\mathcal{D}\rho\right)(t):= \int_{\bGamma}\frac{\partial\mathbf{G}(t,s)}{\partial\mathbf{n}(s)}\rho(s)ds.
\end{eqnarray}
Based on the potential theory~\cite{colton2019inverse}, when $\mathbf{x}\notin \bGamma$ approaches the boundary $\bGamma$, it holds:
\begin{eqnarray}\label{boundpotential}
\mathbf{S}\rho\rightarrow\mathcal{S}\rho,\quad\mathbf{D}\rho\rightarrow\pm\frac{1}{2}\rho+\mathcal{D}\rho,
\end{eqnarray}
where `$+$' and `$-$' correspond to $\mathbf{x}\in D_{\rm ex}$ and $\mathbf{x}\in D$, respectively. 

As established in~\cite{sokolowski1992introduction},the normal velocity fields are sufficient to describe the shape perturbation. Therefore, without loss of generality, we assume the boundary is perturbed by velocity fields of the form  $v(s)\mathbf{n}(s)$, where $v$ is a smooth function on $\bGamma$. In particular, the perturbed boundary $\bGamma_{\rm per}$ by a single velocity field is parameterized by 
\begin{eqnarray}\label{shapeper}
    \bgamma_{\rm per}(s) = \bgamma(s) + \epsilon v(s)\mathbf{n}(s),
\end{eqnarray}
 where $\epsilon$ is sufficiently small. 

In the remainder of this section, we will derive the explicit forms of shape Taylor expansions given by equation~\eqref{shapeTaylorMultiV} for the four types of boundary conditions. 

\subsection{Sound-soft boundary}\label{ssbound}

Recall the boundary condition on $\bGamma$ for the total field $u_{\rm tot}$ in sound-soft scattering problems is: 
\begin{eqnarray}
    u_{\rm tot} = \mathbf{n}\cdot\mathbf{Tr}^{\mathcal{D}}(u_{\rm tot}) = \phi + u = 0\qquad{\rm on}\quad\bGamma.
\end{eqnarray}
Setting $N=0$ in equation~\eqref{bound_diri}, the boundary condition for the first order shape derivative $\delta_\mathbf{v} u$ is: 
\begin{eqnarray}\label{vecBD1}
   \delta_{\mathbf{v}} u = -\mathbf{v}\cdot\nabla\mathbf{n}\cdot\mathbf{Tr}^{\mathcal{D}}(u_{\rm tot}) = -v\frac{\partial \phi}{\partial\mathbf{n}} -v\frac{\partial u}{\partial\mathbf{n}}  \qquad{\rm on}\quad\bGamma.
\end{eqnarray}
Let $\mathbf{w}:=w(s)\mathbf{n}(s)$ be another velocity field, by setting $N=1$ in equation~\eqref{bound_diri}, the boundary condition for the second order shape derivative $\delta_{[\mathbf{v},\mathbf{w}]}u$ w.r.t. $\mathbf{v}$ and $\mathbf{w}$ is
\begin{eqnarray}\label{vecBD2}
\begin{aligned}
    \delta_{[\mathbf{v},\mathbf{w}]}u =&-\mathbf{w}\cdot\nabla\delta_{\mathbf{v}}u+\mathbf{w}\cdot\nabla\left(-v\frac{\partial\phi}{\partial\mathbf{n}} -v\frac{\partial u}{\partial\mathbf{n}}\right)+\delta_{\mathbf{w}}^u\left(-v\frac{\partial\phi}{\partial\mathbf{n}} -v\frac{\partial u}{\partial\mathbf{n}}\right)\\
    &-vw\frac{\partial^2 \phi}{\partial\mathbf{n}^2} - v\frac{\partial\delta_{\mathbf{w}}u}{\partial\mathbf{n}} - w\frac{\partial\delta_{\mathbf{v}}u}{\partial\mathbf{n}} -vw\frac{\partial^2 u}{\partial\mathbf{n}^2} \qquad{\rm on}\quad\bGamma.
\end{aligned}
\end{eqnarray}
In general, for $N>2$, the shape derivative w.r.t. $N$ velocity fields $\mathbf{v}_{[N]}= [\mathbf{v}_1, \mathbf{v}_2,\dots,$ $\mathbf{v}_N]$ can be recursively derived through equation~\eqref{bound_diri}. If we assume
 that all velocity fields $\mathbf{v}_j$ for $j = 1,\dots,N$ in $\mathbf{v}_{[N]}$ are the same. The boundary condition for the $N$th order shape derivative $\delta_{\mathbf{v}_{[N]}} u$ is given by
\begin{eqnarray}\label{vecBDN1}
    \delta_{\mathbf{v}_{[N]}} u = -\left(v\frac{\partial}{\partial\mathbf{n}}\right)^N \phi - \sum_{j = 0}^{N-1}\binom{N}{j}\left(v\frac{\partial}{\partial\mathbf{n}}\right)^{N-j}\delta_{\mathbf{v}_{[j]}} u  \qquad{\rm on}\quad\bGamma.
\end{eqnarray}

\subsection{Sound-hard boundary}\label{shbound}
The boundary condition for $u_{\rm tot}$ in sound-hard scattering problems is
\begin{eqnarray}\label{bound311}
    \alpha \frac{\partial u_{\rm tot}}{\partial\mathbf{n}} =  \alpha\frac{\partial \phi}{\partial\mathbf{n}} + \alpha\frac{\partial u}{\partial\mathbf{n}} = 0 \qquad{\rm on}\quad\bGamma.
\end{eqnarray}
 Given $N = 0$ in equation~\eqref{bound_neum}, following Theorem \ref{thmImpenetrable}, the first order shape derivative w.r.t. $\mathbf{v}$ satisfies
\begin{eqnarray}\label{bound_neum_11}
\begin{aligned}
     \mathbf{n}\left(\mathbf{n}\cdot\alpha\nabla\delta_{\mathbf{v}}u\right) =& -\mathbf{v}\nabla\cdot\left(\mathbf{n}\alpha\frac{\partial u_{\rm tot}}{\partial\mathbf{n}}\right) - \delta_{\mathbf{v}}^\mathbf{n}\left(\mathbf{n}\alpha\frac{\partial u_{\rm tot}}{\partial\mathbf{n}}\right)\\
     =&-\mathbf{v}\alpha\frac{\partial^2 u_{\rm tot}}{\partial\mathbf{n}^2} - \mathbf{v}\bkappa\alpha\frac{\partial u_{\rm tot}}{\partial\mathbf{n}} - \mathbf{n}\alpha\frac{\partial u_{\rm tot}}{\partial\delta_{\mathbf{v}}\mathbf{n}},
\end{aligned}
\end{eqnarray}
where $\bkappa$ represents the curvature of $\bGamma$, which follows from $\nabla\cdot\mathbf{n} = \bkappa$. To see the second identity in equation~\eqref{bound_neum_11}, we note that
\begin{eqnarray}\label{eqn313}
    \delta_{\mathbf{v}}^{\mathbf{n}}\left(\mathbf{n}\alpha\frac{\partial u_{\rm tot}}{\partial\mathbf{n}}\right) = \delta_{\mathbf{v}}\mathbf{n}\alpha\frac{\partial u_{\rm tot}}{\partial\mathbf{n}} + \mathbf{n}\alpha\frac{\partial u_{\rm tot}}{\partial\delta_{\mathbf{v}}\mathbf{n}}.
\end{eqnarray}
The term $\delta_{\mathbf{v}}\mathbf{n}\alpha\frac{\partial u_{\rm tot}}{\partial\mathbf{n}}$ can be omitted due to the boundary condition~\eqref{bound311}. To obtain an explicit formula for the second term in the right side of equation \eqref{eqn313}, we recall the arc length parametrization of $\bGamma$, given by $\bgamma(s):=[\gamma_1(s),\gamma_2(s)]^\top$. This parametrization yields the unit tangential vector $\btau(s) = [\dot{\gamma_1}(s),\dot{\gamma_2}(s)]^\top$ and the unit normal vector $\mathbf{n}(s) = [\dot{\gamma}_2(s),-\dot{\gamma}_1(s)]^\top$. Since $\btau(s)$ and $\mathbf{n}(s)$ remain orthogonal for any $s$, determining $\delta_{\mathbf{v}}\mathbf{n}$ is equivalent to determining $\delta_{\mathbf{v}}\btau$. According to the perturbed boundary expressed in equation~\eqref{shapeper}, the parametrization of the perturbed unit tangential vector is given by
\begin{eqnarray}\label{tauper}
    \btau_{\rm per}(s) = \frac{\btau(s) + \epsilon v(s)\bkappa(s)\btau(s) + \epsilon\dot{v}(s)\mathbf{n}(s)}{\sqrt{\big(1 + \epsilon v(s)\bkappa(s)\big)^2 + \big(\epsilon\dot{v}(s)\big)^2}},
\end{eqnarray}
where we use the identity $\dot{\mathbf{n}} = \bkappa\btau$~\cite{delfour2011shapes}. From the first order Taylor expansion of equation~\eqref{tauper} w.r.t. $\epsilon$, we have
\begin{eqnarray}\label{deltavn}
    \delta_{\mathbf{v}}\btau = \lim_{\epsilon\rightarrow0}\frac{\btau_{\rm per} - \btau}{\epsilon} = \dot{v}\mathbf{n}\ \Rightarrow\ \delta_{\mathbf{v}}\mathbf{n} = -\dot{v}\btau.
\end{eqnarray}
Substituting equation~\eqref{deltavn} into~\eqref{bound_neum_11}, we obtain the Neumann boundary condition for the shape derivative $\delta_{\mathbf{v}}u$
\begin{eqnarray}\label{Neumanncon1}
    \alpha\frac{\partial\delta_{\mathbf{v}}u}{\partial\mathbf{n}} = -\left(\mathbf{v}\cdot\mathbf{n}\right)\alpha\left(\frac{\partial^2 \phi}{\partial\mathbf{n}^2} + \frac{\partial^2 u}{\partial\mathbf{n}^2}\right) + \dot{v}\alpha\left(\frac{\partial \phi}{\partial\btau} + \frac{\partial u}{\partial\btau}\right).
\end{eqnarray}

  By setting $N = 1$ in equation~\eqref{bound_neum} and using equation~\eqref{bound_neum_11}, it yields the second order shape derivative $\delta_{[\mathbf{v},\mathbf{w}]}u$ w.r.t. the velocity fields $\mathbf{v}$ and $\mathbf{w}$ as
\begin{eqnarray}\label{bound_Neum_22}
\begin{aligned}
    \mathbf{n}\alpha\frac{\partial\delta_{[\mathbf{v},\mathbf{w}]}u}{\partial\mathbf{n}} =&-\mathbf{w}\alpha\frac{\partial^2 \delta_{\mathbf{v}}u}{\partial\mathbf{n}^2} - \mathbf{w}\alpha\bkappa\frac{\partial \delta_{\mathbf{v}}u}{\partial\mathbf{n}} - \mathbf{n}\alpha\frac{\partial \delta_{\mathbf{v}}u}{\partial\delta_{\mathbf{w}}\mathbf{n}}\\
    &+\mathbf{w}\nabla\cdot\mathbf{Tr}^{\mathcal{N}}\left(\delta_{\mathbf{v}}u\right) +\delta_{\mathbf{w}}^{\mathbf{n}}\mathbf{Tr}^{\mathcal{N}}\left(\delta_{\mathbf{v}}u\right)+\delta_{\mathbf{w}}^{u}\mathbf{Tr}^{\mathcal{N}}\left(\delta_{\mathbf{v}}u\right)\\
    =&-\mathbf{w}\alpha\frac{\partial^2 \delta_{\mathbf{v}}u}{\partial\mathbf{n}^2} - \mathbf{w}\alpha\bkappa\frac{\partial \delta_{\mathbf{v}}u}{\partial\mathbf{n}} - \mathbf{n}\alpha\frac{\partial \delta_{\mathbf{v}}u}{\partial\delta_{\mathbf{w}}\mathbf{n}}\\
        &-\mathbf{w}v\alpha\frac{\partial^3 u_{\rm tot}}{\partial \mathbf{n}^3} - \mathbf{w}v\alpha\bkappa\frac{\partial^2 u_{\rm tot}}{\partial\mathbf{n}^2} - \mathbf{w}\alpha\frac{\partial^2u_{\rm tot}}{\partial\mathbf{n}\partial\delta_{\mathbf{v}}\mathbf{n}}\\
        &-\mathbf{w}v\bkappa\alpha\frac{\partial^2 u_{\rm tot}}{\partial\mathbf{n}^2} - \mathbf{w}v\bkappa^2\alpha\frac{\partial u_{\rm tot}}{\partial\mathbf{n}} - \mathbf{w}\bkappa\alpha\frac{\partial u_{\rm tot}}{\partial\delta_{\mathbf{v}}\mathbf{n}}\\
        &-\mathbf{v}\alpha\frac{\partial^2 \delta_{\mathbf{w}}u}{\partial\mathbf{n}^2} - \mathbf{v}\bkappa\alpha\frac{\partial \delta_{\mathbf{w}}u}{\partial\mathbf{n}} - \mathbf{n}\alpha\frac{\partial \delta_{\mathbf{w}}u}{\partial\delta_{\mathbf{v}}\mathbf{n}}\\
        &-2\mathbf{v}\alpha\frac{\partial^2 u_{\rm tot}}{\partial\mathbf{n}\partial\delta_{\mathbf{w}}\mathbf{n}} - \mathbf{v}\bkappa\alpha\frac{\partial u_{\rm tot}}{\partial\delta_{\mathbf{w}}\mathbf{n}} - \mathbf{n}\alpha\frac{\partial u_{\rm tot}}{\partial\delta_{\left[\mathbf{v},\mathbf{w}\right]}\mathbf{n}}.
\end{aligned}
\end{eqnarray}
The derivation of equation~\eqref{bound_Neum_22} is similar to that of equation~\eqref{bound_neum_11}, with the only difficulty being the second order derivative $\delta_{[\mathbf{v},\mathbf{w}]}\mathbf{n}$. By using the second order Taylor expansion of equation~\eqref{tauper} w.r.t. $\epsilon$, we obtain
\begin{eqnarray}\label{de2btau}
    \delta_{[\mathbf{v},\mathbf{v}]}\btau = 2\lim_{\epsilon\rightarrow0}\frac{\btau_{\rm per} - \btau - \epsilon\delta_{\mathbf{v}}\btau}{\epsilon^2} = ( 3v^2\bkappa^2 - \dot{v}^2 )\btau - v\dot{v}\bkappa\mathbf{n}.
\end{eqnarray}
Replacing $\epsilon\mathbf{v}$ with $\epsilon_1\mathbf{v} + \epsilon_2\mathbf{w}$ in equation~\eqref{tauper} and combining equation~\eqref{de2btau}, we obtain
\begin{eqnarray}
    \delta_{[\mathbf{v},\mathbf{w}]}\btau = \left( 3vw\bkappa^2 - \dot{v}\dot{w}\right)\btau - \frac{v\dot{w}+w\dot{v}}{2}\bkappa\mathbf{n}.
\end{eqnarray}
Thus, the second order shape derivative of the normal vector $\mathbf{n}$ w.r.t. $\mathbf{v}$ and $\mathbf{w}$ is given by
\begin{eqnarray}
    \delta_{[\mathbf{v},\mathbf{w}]}\mathbf{n} = (3vw\bkappa^2 - \dot{v}\dot{w})\mathbf{n} + \frac{v\dot{w}+w\dot{v}}{2}\bkappa\btau.
\end{eqnarray}
Therefore the terms involving $\delta_{[\mathbf{v},\mathbf{w}]}\mathbf{n}$ in equation~\eqref{bound_Neum_22} can be computed as
\begin{eqnarray}\label{de2norm}
    \mathbf{n}\alpha\frac{\partial u_{\rm tot}}{\partial\delta_{[\mathbf{v},\mathbf{w}]}\mathbf{n}} = \frac{\mathbf{v}\dot{w}+\mathbf{w}\dot{v}}{2}
    \bkappa\alpha\left(\frac{\partial \phi}{\partial\btau}+\frac{\partial u}{\partial\btau}\right).
\end{eqnarray}
Considering the boundary condition for $u_{\rm tot}$, we get
\begin{eqnarray}
    \mathbf{w}\alpha\frac{\partial^2u_{\rm tot}}{\partial\mathbf{n}\partial\delta_{\mathbf{v}}\mathbf{n}} = \mathbf{v}\alpha\frac{\partial^2 u_{\rm tot}}{\partial\mathbf{n}\partial\delta_{\mathbf{w}}\mathbf{n}}=\mathbf{w}v\bkappa^2\alpha\frac{\partial u_{\rm tot}}{\partial\mathbf{n}}=0.
\end{eqnarray}
Finally, by simplifying equation~\eqref{bound_Neum_22}, the boundary condition for $\delta_{[\mathbf{v},\mathbf{w}]}u$ can be given by
\begin{eqnarray}\label{NeumannCon2}
    \begin{aligned}
        \alpha\frac{\partial\delta_{[\mathbf{v},\mathbf{w}]}u}{\partial\mathbf{n}}=& -w\alpha\frac{\partial^2\delta_{\mathbf{v}}u}{\partial\mathbf{n}^2} - w\bkappa\alpha\frac{\partial \delta_{\mathbf{v}}u}{\partial\mathbf{n}}+\dot{w}\alpha\frac{\partial\delta_{\mathbf{v}}u}{\partial\btau}\\
        &-wv\alpha\left(\frac{\partial^3 \phi}{\partial\mathbf{n}^3} + \frac{\partial^3 u}{\partial\mathbf{n}^3}\right) - 2wv\bkappa\alpha\left(\frac{\partial^2 \phi}{\partial\mathbf{n}^2} + \frac{\partial^2 u}{\partial\mathbf{n}^2}\right)\\
        &+ w\dot{v}\bkappa\alpha\left(\frac{\partial \phi}{\partial\btau} + \frac{\partial u}{\partial\btau}\right) -v\alpha\frac{\partial^2\delta_{\mathbf{w}}u}{\partial\mathbf{n}^2} - v\bkappa\alpha\frac{\partial \delta_{\mathbf{w}}u}{\partial\mathbf{n}}+\dot{v}\alpha\frac{\partial\delta_{\mathbf{w}}u}{\partial\btau}\\
        &+ v\dot{w}\bkappa\alpha\left(\frac{\partial \phi}{\partial\btau} + \frac{\partial u}{\partial\btau}\right) - \frac{v\dot{w}+w\dot{v}}{2}
    \bkappa\alpha\left(\frac{\partial \phi}{\partial\btau}+\frac{\partial u}{\partial\btau}\right).
    \end{aligned}
\end{eqnarray}
The higher order shape derivative can be derived by following the same procedure.

\subsection{Impedance and transmission boundaries}\label{ImandTran}
According to the recurrence formulas~\eqref{bound_impe} and~\eqref{bound_pen}, computing the shape derivatives $\delta_{\mathbf{v}}u$ and $\delta_{[\mathbf{v},\mathbf{w}]}u$ for impedance and transmission scattering problems is essentially the same as in sound-soft and sound-hard problems. In particular, the total field in impedance scattering problems satisfies 
\begin{eqnarray}
    \mathbf{n}\left(\mathbf{n}\cdot\alpha\nabla u_{\rm tot} + i\lambda u_{\rm tot}\right) = 0\quad{\rm on}\quad\bGamma.
\end{eqnarray}
By setting $N=0$ in the recurrence formula~\eqref{bound_impe}, we obtain
\begin{eqnarray}\label{boundimp1}
\begin{aligned}
    &\mathbf{n}\left(\mathbf{n}\cdot\alpha\nabla\delta_{\mathbf{v}}u + i\lambda\delta_{\mathbf{v}}u\right)\\
    =&-\mathbf{v}\nabla\cdot\left(\mathbf{n}\left(\mathbf{n}\cdot\alpha\nabla u_{\rm tot}+i\lambda u_{\rm tot}\right)\right) - \delta_{\mathbf{v}}^{\mathbf{n}}\left(\mathbf{n}\left(\mathbf{n}\cdot\alpha\nabla u_{\rm tot} + i\lambda u_{\rm tot}\right)\right)\\
    =&-\mathbf{v}\left(\alpha\frac{\partial^2 u_{\rm tot}}{\partial\mathbf{n}^2} + i\lambda\frac{\partial u_{\rm tot}}{\partial\mathbf{n}}\right) - \mathbf{v}\bkappa\left(\alpha\frac{\partial u_{\rm tot}}{\partial\mathbf{n}} + i\lambda u_{\rm tot}\right)-\mathbf{n}\alpha\frac{\partial u_{\rm tot}}{\partial\delta_{\mathbf{v}}\mathbf{n}}\\
    =&\mathbf{Tr}^{\mathcal{N}}(\delta_{\mathbf{v}}u) + i\lambda\mathbf{Tr}^{\mathcal{D}}(\delta_{\mathbf{v}}u)\quad{\rm on}\quad\bGamma.
\end{aligned}
\end{eqnarray}
Thus, the boundary condition for $\delta_{\mathbf{v}}u$ is given by
\begin{eqnarray}
    \begin{aligned}
        &\alpha\frac{\partial \delta_{\mathbf{v}}u}{\partial\mathbf{n}} + i\lambda\delta_{\mathbf{v}} u \\
        =& -v\alpha\left(\frac{\partial^2 \phi}{\partial\mathbf{n}^2} + \alpha\frac{\partial^2 u}{\partial\mathbf{n}^2}\right) - i\lambda v\left(\frac{\partial \phi}{\partial\mathbf{n}} + \frac{\partial u}{\partial\mathbf{n}}\right)+\dot{v}\alpha\left(\frac{\partial \phi}{\partial\btau} + \frac{\partial \phi}{\partial\btau}\right)
    \end{aligned}
\end{eqnarray}
For $N = 1$, we introduce another velocity field $\mathbf{w}$. It holds
\begin{eqnarray}\label{bound_impe_2}
    \begin{aligned}
        &\mathbf{n}\left(\mathbf{n}\cdot\alpha\nabla\delta_{[\mathbf{v},\mathbf{w}]}u + i\lambda\delta_{[\mathbf{v},\mathbf{w}]}u\right)\\
        =&-\mathbf{w}\left(\alpha\frac{\partial^2 \delta_{\mathbf{v}}u}{\partial\mathbf{n}^2} + i\lambda\frac{\partial \delta_{\mathbf{v}}u}{\partial\mathbf{n}}\right) - \mathbf{w}\bkappa\left(\alpha\frac{\partial \delta_{\mathbf{v}}u}{\partial\mathbf{n}} + i\lambda \delta_{\mathbf{v}}u\right)-\mathbf{n}\alpha\frac{\partial \delta_{\mathbf{v}}u}{\partial\delta_{\mathbf{w}}\mathbf{n}}\\
        &-\mathbf{w}\nabla\cdot\left(\mathbf{Tr}^{\mathcal{N}}(\delta_{\mathbf{v}}u) + i\lambda\mathbf{Tr}^{\mathcal{D}}(\delta_{\mathbf{v}}u)\right)\\
        &-\delta_{\mathbf{w}}^{\mathbf{n}}\left(\mathbf{Tr}^{\mathcal{N}}(\delta_{\mathbf{v}}u) + i\lambda\mathbf{Tr}^{\mathcal{D}}(\delta_{\mathbf{v}}u)\right)\\
        &-\delta_{\mathbf{w}}^{u}\left(\mathbf{Tr}^{\mathcal{N}}(\delta_{\mathbf{v}}u) + i\lambda\mathbf{Tr}^{\mathcal{D}}(\delta_{\mathbf{v}}u)\right),\quad{\rm on}\quad\bGamma,
    \end{aligned}
\end{eqnarray}
where $\mathbf{Tr}^{\mathcal{N}}(\delta_{\mathbf{v}}u) + i\lambda\mathbf{Tr}^{\mathcal{D}}(\delta_{\mathbf{v}}u)$ is given by equation~\eqref{boundimp1}. Then the boundary condition for $\delta_{[\mathbf{v},\mathbf{w}]}u$ under the impedance case can be derived from equation~\eqref{bound_impe_2}, following the same steps as for boundary condition~\eqref{NeumannCon2} from equation~\eqref{bound_Neum_22}.

For penetrable scattering problems with transmission boundary conditions, considering the case of $N = 0$ in equation~\eqref{bound_pen}, we get
\begin{eqnarray}\label{bound_tran1}
    \left[\delta_{\mathbf{v}}u\right]_{\bGamma} = -\mathbf{v}\cdot\nabla\left[u_{\rm tot}\right]_{\bGamma} = -\left[\frac{\partial \phi}{\partial\mathbf{n}}+\frac{\partial u}{\partial\mathbf{n}}\right]_{\bGamma},
\end{eqnarray}
and
\begin{eqnarray}\label{bound_tran2}
    \begin{aligned}
    \left[\mathbf{n}\left(\mathbf{n}\cdot\alpha\nabla\delta_{\mathbf{v}}u\right)\right]_{\bGamma} =& -\mathbf{v}\nabla\cdot\left[\mathbf{n}\left(\mathbf{n}\cdot\alpha\nabla u_{\rm tot}\right)\right]_{\bGamma}-\delta_{\mathbf{v}}^{\mathbf{n}}\left[\mathbf{n}\left(\mathbf{n}\cdot\alpha\nabla u_{\rm tot}\right)\right]_{\bGamma}\\
    =&\left[-\mathbf{v}\alpha\left(\frac{\partial^2 \phi}{\partial\mathbf{n}^2}+\frac{\partial^2 u}{\partial\mathbf{n}^2}\right) + \mathbf{n}\dot{v}\alpha\left(\frac{\partial \phi}{\partial\btau}+\frac{\partial u}{\partial\btau}\right)\right]_{\bGamma}.
    \end{aligned}
\end{eqnarray}
 Comparing equation~\eqref{bound_tran1} with~\eqref{vecBD1}, and~\eqref{bound_tran2} with~\eqref{Neumanncon1}, one can see that the boundary conditions for $\delta_{\mathbf{v}}u$ are obtained directly from the conclusions of sound-soft and sound-hard cases, namely, by taking the difference of boundary data across the interface. The higher order shape derivatives for the penetrable problems follow the same principle as the first order case.

\subsection{Evaluating normal derivatives}\label{normalderi}
The primary components of the boundary conditions on $\bGamma$ for the shape derivatives given by equations~\eqref{vecBD2} and~\eqref{NeumannCon2} are the $N$th order normal and tangential derivatives of the scattered field $u$. While the tangential derivatives can be computed directly, the normal derivatives require a more involved approach. In this part, we derive the normal derivatives of arbitrary order of a field $u$ from the combinations of Dirichlet data $u|_{\bGamma}$ and Neumann data $\partial_{\mathbf{n}}u|_{\bGamma}$. Specifically, the proof of the following theorem provides a recursive method to compute these combinations~\cite{schulz1998computation,schwab1999extraction}.
\begin{theorem}\label{T31}
    Let $\bgamma\in C^{\infty}([0, S],\mathbb{R}^2)$ be the parameterization mapping from the arc length $s$ to the boundary $\bGamma$ and $u$ be the scattered field of problem~\eqref{scattering_eqn2}. Then the $N$th order normal derivative $\partial^N_{\mathbf{n}}u$ on $\bGamma$ is a linear combination of $\partial_{\btau}^j u$ and $\partial_{\btau}^j \partial_{\mathbf{n}}u$ with $0\le j\le N$.
\end{theorem}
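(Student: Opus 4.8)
The plan is to pass to a tubular (normal) coordinate system around $\bGamma$ and to turn the Helmholtz equation into a recursion that trades two normal derivatives for two tangential ones. First I would introduce the map $\mathbf{x}(s,t)=\bgamma(s)+t\,\mathbf{n}(s)$ on a one-sided neighborhood of $\bGamma$ inside $D_{\rm ex}$, so that $\partial_t=\mathbf{n}\cdot\nabla$ is the normal derivative and, on $\{t=0\}$, $\partial_s$ coincides with the arc-length tangential derivative $\partial_{\btau}$. Using $\dot{\mathbf{n}}=\bkappa\btau$ (cf.~\eqref{tauper}), the coordinate vectors are $\partial_s\mathbf{x}=(1+t\bkappa)\btau$ and $\partial_t\mathbf{x}=\mathbf{n}$, so the metric is diagonal with Jacobian $J:=1+t\bkappa(s)>0$ for small $|t|$. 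Writing the Laplacian in these coordinates gives
\begin{equation*}
  \Delta u=\partial_t^2 u+\frac{\bkappa}{1+t\bkappa}\,\partial_t u+\frac{1}{J}\partial_s\Big(\frac{1}{J}\partial_s u\Big).
\end{equation*}

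Next, since $u$ is smooth up to $\bGamma$ and solves the homogeneous Helmholtz equation $\alpha\Delta u+k^2u=0$ throughout $D_{\rm ex}$, I would solve this identity for the second normal derivative to obtain the \emph{base relation}
\begin{equation*}
  \partial_t^2 u=-\frac{k^2}{\alpha}\,u-\frac{\bkappa}{1+t\bkappa}\,\partial_t u-\frac{1}{J}\partial_s\Big(\frac{1}{J}\partial_s u\Big),
\end{equation*}
valid in the whole tube, not merely on $\bGamma$. Its right-hand side involves only $u$ and $\partial_t u$, carries at most two tangential derivatives, and has smooth coefficients (smooth because $J>0$). This is the engine of the recursion: it expresses $\partial_t^2u$ through quantities of strictly lower normal order.

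The core is an induction on $N$ for the strengthened statement that, in the tube,
\begin{equation*}
  \partial_t^N u=\sum_{j=0}^{N} a_{N,j}(s,t)\,\partial_s^j u+\sum_{j=0}^{N-1} b_{N,j}(s,t)\,\partial_s^j\partial_t u
\end{equation*}
with smooth coefficients. For the step I would apply $\partial_t$ to this expression; since $\partial_t$ and $\partial_s$ commute as coordinate derivatives, the only term raising the normal order is $\sum_j b_{N,j}\,\partial_s^j\partial_t^2 u$, into which I substitute the base relation. Applying $\partial_s^j$ (with $j\le N-1$) to that relation raises the tangential order of the Dirichlet part by at most two, to $j+2\le N+1$, while the Neumann part stays at order $\le N$; all remaining terms already have tangential order $\le N$. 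This reproduces the claimed form at level $N+1$, closing the induction. Finally I would restrict to $t=0$: because $\partial_s$ is tangent to $\{t=0\}$ it commutes with the restriction, so $\partial_s^j u|_{t=0}=\partial_{\btau}^j u$, $\partial_s^j\partial_t u|_{t=0}=\partial_{\btau}^j\partial_{\mathbf{n}}u$, and $\partial_t^N u|_{t=0}=\partial_{\mathbf{n}}^N u$. Hence $\partial_{\mathbf{n}}^N u$ is a linear combination of $\partial_{\btau}^j u$ $(0\le j\le N)$ and $\partial_{\btau}^j\partial_{\mathbf{n}}u$ $(0\le j\le N-1)$, which yields the assertion.

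I expect the main obstacle to be the order bookkeeping in the induction, namely verifying that substituting the base relation never pushes the tangential order above $N$. This is precisely why the inductive hypothesis must be strengthened to bound the Neumann tangential order by $N-1$ rather than $N$: with that sharpened bound the worst term $\partial_s^{j}\partial_t^2u$ lands at tangential order $j+2\le N+1$ and the count closes, whereas the naive hypothesis $j\le N$ would leak an extra order at each step. The geometric setup (one-sidedness of the exterior tube, the sign convention giving $\dot{\mathbf{n}}=\bkappa\btau$, and smoothness of the coefficients through $J=1+t\bkappa$) is routine but must be in place before the recursion can be run.
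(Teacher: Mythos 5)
Your proposal is correct and follows essentially the same route as the paper: both pass to the tubular coordinates $\mathbf{y}=\bgamma(s)+\eta\,\mathbf{n}(s)$, rewrite the Helmholtz equation to solve for $\partial_\eta^2 u$ in terms of $u$, $\partial_\eta u$ and at most two tangential derivatives, and then run the same induction on the ansatz $\partial_\eta^N u=\sum_{i\le N}a_i^N\partial_s^i u+\sum_{j\le N-1}b_j^N\partial_s^j\partial_\eta u$ before restricting to the boundary. The only cosmetic difference is that you keep the tangential part of the Laplacian in the form $\frac{1}{J}\partial_s\bigl(\frac{1}{J}\partial_s u\bigr)$ where the paper expands it, and you make the order-bookkeeping of the induction explicit where the paper leaves it implicit.
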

\begin{remark}
     The coefficients of the combination only depend on the geometrical parameters of $\bGamma$ and the wavenumber $k$.
\end{remark}
\begin{proof}
    Consider the impenetrable case only and let $\mathbf{y}\in D_{\rm ex}$ be a point near $\bGamma$. We adopt the local coordinate representation similar to that used in~\cite{schulz1998computation}. Denote $(s,\eta)$ the coordinates of $\mathbf{y}$ in this curvilinear coordinate system, where $s$ is the arc length along $\bGamma$ and $\eta$ is the distance between $\mathbf{y}$ and $\bgamma(s)$. Then $\mathbf{y}$ is expressed as $\mathbf{y} = \bgamma(s) + \eta\mathbf{n}(s)$. Following~\cite{delfour2011shapes}, we have
   \begin{eqnarray}
       \frac{\partial \mathbf{y}}{\partial s} = \partial_s\bgamma + \eta\partial_s\mathbf{n} = (1+\eta\bkappa)\partial_s\bgamma: = \bchi\partial_s\bgamma,\quad\frac{\partial \mathbf{y}}{\partial\eta} = \mathbf{n}.
   \end{eqnarray}
   Without loss of generality, we set $\alpha\equiv1$ in equation~\eqref{scattering_eqn2}, which yields
   \begin{eqnarray}\label{N2}
       \begin{aligned}
           0 = \Delta u(\mathbf{y}) + k^2u(\mathbf{y}) = \frac{1}{\bchi^2}\partial^2_su - \frac{\eta\partial_s\bkappa}{\bchi^3}\partial_su + \partial_\eta^2u + \frac{\bkappa}{\bchi}\partial_\eta u + k^2u.
       \end{aligned}
   \end{eqnarray}
   Let $\eta\rightarrow0^+$ (i.e., $\mathbf{y}\rightarrow\bGamma^+$), we obtain
   \begin{eqnarray}
       \bchi\rightarrow1,\quad \partial^j_\eta u\rightarrow\partial^j_\mathbf{n}u,\quad \partial_s^ju\rightarrow \partial_{\btau}^ju,\quad\partial^j_s\partial_\eta u\rightarrow\partial^j_{\btau}\partial_{\mathbf{n}}u,\quad j\ge1.
   \end{eqnarray} 
    Thus the value of $\partial^2_\mathbf{n}u$ is given by
    \begin{eqnarray}\label{norm2}
        \partial^2_{\mathbf{n}} u = -\bkappa \partial_{\mathbf{n}}u - \partial^2_{\btau}u - k^2u\qquad{\rm on}\quad\bGamma.
    \end{eqnarray}    
    
    To derive $\partial_\mathbf{n}^Nu$ for $N \ge 3$, we take the derivative $\partial_\eta$ on both sides of equation~\eqref{N2} and substitute $\partial^2_\eta u$ using equation~\eqref{N2}. Suppose the linear combination for the $N$th order normal derivative $\partial^N_\eta u$ at $\mathbf{y}$ is   
    \begin{eqnarray}\label{lincomb}
       \partial^N_\eta u = \sum_{i = 0}^N a^N_i\partial^i_su + \sum_{j = 0}^{N-1} b^N_j\partial^j_s\partial_\eta u,
   \end{eqnarray}
   where, according to equation~\eqref{norm2}, for $N = 2$ it holds
   \begin{eqnarray}
          a^2_0 = -k^2,\quad a^2_1 = \frac{\eta\partial_s\bkappa}{\bchi^3},\quad a^2_2 = -\frac{1}{\bchi^2},\quad b^2_0 = -\frac{\bkappa}{\bchi},\quad b^2_1 = 0.
   \end{eqnarray}
    Then the $(N+1)$th order normal derivative of $u$ w.r.t. $\eta$ is given by
   \begin{eqnarray}\label{u_eta}
   \begin{aligned}
       \partial^{N+1}_\eta u =& \sum_{i = 0}^N a^N_i\partial^i_s\partial_\eta u + \sum_{j = 0}^{N-1} b^N_j\partial^j_s\partial_\eta^2u\\
       =&\sum_{i = 0}^N a^N_i\partial^i_s\partial_\eta u + \sum_{j = 0}^{N-1} b^N_j\partial^j_s\left(-k^2u + \frac{\eta\partial_s\bkappa}{\bchi^3}\partial_su - \frac{1}{\bchi^2}\partial^2_su - \frac{\bkappa}{\bchi}\partial_\eta u\right).
   \end{aligned}
   \end{eqnarray}
    Taking $\eta\rightarrow0^+$ in equation~\eqref{u_eta} yields
    \begin{eqnarray}
        \partial^{N+1}_\eta u\rightarrow\partial^{N+1}_\mathbf{n}u,\qquad\partial^i_su\rightarrow\partial^i_{\btau}u,\qquad\partial^j_s\partial_\eta u\rightarrow\partial^j_{\btau}\partial_{\mathbf{n}}u.
    \end{eqnarray}
\end{proof}

For numerical purposes, here we give the explicit formula for $\partial_{\mathbf{n}}^3 u$.
\begin{corollary}\label{Coro31}
    The third order normal derivative of the scattered field $u$ on the boundary $\bGamma$ is given by
    \begin{eqnarray}\label{coro32}
        \partial_\mathbf{n}^3u = 3\bkappa\partial^2_{\btau}u + \partial_{\btau}\bkappa\partial_{\btau}u + k^2\bkappa u - \partial^2_{\btau}\partial_{\mathbf{n}}u + (2\bkappa^2 - k^2)\partial_{\mathbf{n}}u.
    \end{eqnarray}  
\end{corollary}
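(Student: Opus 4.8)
The plan is to execute one further step of the recursion in the proof of Theorem~\ref{T31} at $N=2$ and then take the boundary trace. Concretely, I would start from the $N=2$ instance of the linear combination \eqref{lincomb}, whose coefficients $a^2_i$ and $b^2_j$ are listed in that proof, equivalently from solving the curvilinear Helmholtz identity \eqref{N2} for
\begin{equation*}
    \partial_\eta^2 u = -\frac{1}{\bchi^2}\partial_s^2 u + \frac{\eta\partial_s\bkappa}{\bchi^3}\partial_s u - \frac{\bkappa}{\bchi}\partial_\eta u - k^2 u .
\end{equation*}
Differentiating this identity once more in $\eta$ gives $\partial_\eta^3 u$. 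The essential point, which governs the whole computation, is that the coefficients $1/\bchi^2$, $\eta\partial_s\bkappa/\bchi^3$ and $\bkappa/\bchi$ depend on $\eta$ through $\bchi=1+\eta\bkappa$ (while $\bkappa$ and $\partial_s\bkappa$ are functions of $s$ alone), so $\partial_\eta\bchi=\bkappa$ and each coefficient must be differentiated under the product rule. I would then re-substitute the displayed expression for $\partial_\eta^2 u$ to clear the second normal derivative that reappears, leaving a right-hand side built only from $u$, $\partial_s^j u$, $\partial_\eta u$ and $\partial_s^j\partial_\eta u$.

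Next I would pass to the boundary by letting $\eta\to0^+$, using $\bchi\to1$ together with the trace conventions $\partial_s^j u\to\partial_\btau^j u$, $\partial_\eta u\to\partial_\mathbf{n}u$, and $\partial_s^j\partial_\eta u\to\partial_\btau^j\partial_\mathbf{n}u$ recorded in the proof of Theorem~\ref{T31}. In this limit every factor carrying an explicit $\eta$ drops out, but the $\eta$-derivatives of the coefficients survive: $\partial_\eta(1/\bchi^2)=-2\bkappa/\bchi^3$ contributes a curvature multiple of $\partial_\btau^2 u$, and $\partial_\eta(\eta\partial_s\bkappa/\bchi^3)\to\partial_s\bkappa$ supplies exactly the tangential term $\partial_\btau\bkappa\,\partial_\btau u$. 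Collecting the limits yields
\begin{equation*}
    \partial_\mathbf{n}^3 u = 2\bkappa\,\partial_\btau^2 u - \partial_\btau^2\partial_\mathbf{n}u + \partial_\btau\bkappa\,\partial_\btau u + \bkappa^2\partial_\mathbf{n}u - \bkappa\,\partial_\mathbf{n}^2 u - k^2\partial_\mathbf{n}u .
\end{equation*}

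Finally I would remove the remaining $\partial_\mathbf{n}^2 u$ using the explicit second-order formula \eqref{norm2}, namely $\partial_\mathbf{n}^2 u = -\bkappa\partial_\mathbf{n}u-\partial_\btau^2 u-k^2 u$; the term $-\bkappa\,\partial_\mathbf{n}^2 u$ then expands to $\bkappa\partial_\btau^2 u+\bkappa^2\partial_\mathbf{n}u+k^2\bkappa u$, and combining like terms produces the coefficients $3\bkappa$ on $\partial_\btau^2 u$, $(2\bkappa^2-k^2)$ on $\partial_\mathbf{n}u$, and $k^2\bkappa$ on $u$, which is precisely \eqref{coro32}. I expect the only genuine obstacle to be careful bookkeeping in the differentiation step: because several coefficients vanish as $\eta\to0$ yet have nonzero $\eta$-derivatives, one must differentiate the full $\eta$-dependent expression rather than freezing the coefficients at their boundary values, otherwise the tangential term $\partial_\btau\bkappa\,\partial_\btau u$ and part of the $3\bkappa\,\partial_\btau^2 u$ coefficient are lost. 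Everything else is routine algebra.
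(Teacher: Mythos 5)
Your proposal is correct and follows exactly the route the paper intends: one further step of the recursion from the proof of Theorem~\ref{T31}, i.e., differentiating the solved form of \eqref{N2} in $\eta$, re-substituting $\partial_\eta^2 u$, and passing to the trace before eliminating $\partial_\mathbf{n}^2u$ via \eqref{norm2}; your intermediate expression and the final coefficient bookkeeping both check out against \eqref{coro32}. Your insistence on differentiating the $\eta$-dependent coefficients (through $\bchi=1+\eta\bkappa$) is indeed essential --- the displayed recursion \eqref{u_eta} omits the $\partial_\eta a^N_i$ and $\partial_\eta b^N_j$ terms, and freezing the coefficients would lose both $\partial_\btau\bkappa\,\partial_\btau u$ and part of the $3\bkappa\,\partial_\btau^2u$ coefficient, whereas the circle recurrence \eqref{CircleCoef} confirms these terms are meant to be present.
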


When $\bGamma$ is a circle, in which $\partial_s\bkappa = \partial_s\bchi \equiv 0$,  the linear coefficients $a^N_j$ and $b^N_j$ in equation~\eqref{lincomb} can be greatly simplified. 
\begin{corollary}
    Let $\bGamma$ be a circle. Then the derivative of the scattered field $u$ near $\bGamma$ satisfies
\begin{eqnarray}\label{pspn}
    \partial^M_s\partial^N_\eta u = \sum_{i = 0}^Na^N_i\partial^{i + M}_su + \sum_{j = 0}^{N-1}b^N_j\partial^{j+M}_s\partial_\eta u.
\end{eqnarray}
with $\left[a^2_0, a^2_1, a^2_2,b^2_0,b^2_1\right] = \left[-k^2,0,-\frac{1}{\bchi^2},-\frac{\bkappa}{\bchi},0\right]$ for $N=2$. The recurrence coefficients from $N$ to $N+1$ for $N\ge2$ are given by
\begin{eqnarray}\label{CircleCoef}
\begin{aligned}
    &a^{N+1}_0=&&\partial_\eta a^N_0 + b^N_0a^2_0,\\
    &a^{N+1}_1=&&\partial_\eta a^N_1 + b^N_1a^2_0 + b^N_0a^2_1,\\
    &a^{N+1}_j=&&\partial_\eta a^N_j + b^N_ja^2_0 + b^N_{j-1}a^2_1 + b^N_{j-2}a^2_2,\quad j = 2,3,\dots,N-1,\\
    &a^{N+1}_N=&&\partial_\eta a^N_N + b^N_{N-1}a^2_1 + b^N_{N-2}a^2_2,\\
    &a^{N+1}_{N+1}=&&b^N_{N-1}a^2_2,
\end{aligned}
\end{eqnarray}
and
\begin{eqnarray}
    \begin{aligned}
        &b^{N+1}_0 =&& a^N_0 + \partial_\eta b^N_0 + b^N_0b^2_0,\\
        &b^{N+1}_j =&& a^N_j + \partial_\eta b^N_j + b^N_{j-1}b^2_1,\quad j = 1,2,\dots,N-1,\\
        &b^{N+1}_N =&& a^N_N + b^N_{N-1}b^2_1.\\
    \end{aligned}
\end{eqnarray}
\end{corollary}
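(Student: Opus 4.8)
The plan is to treat this as a specialization of the local-coordinate computation behind Theorem~\ref{T31} and then run an induction on $N$. First I would revisit the curvilinear identity~\eqref{N2} used in that proof and impose that $\bGamma$ is a circle. Then $\bkappa$ is constant along $\bGamma$ and $\bchi=1+\eta\bkappa$ depends only on the offset $\eta$, so $\partial_s\bkappa=\partial_s\bchi=0$. With these two terms gone, \eqref{N2} collapses to $\partial_\eta^2 u=-k^2u-\frac{1}{\bchi^2}\partial_s^2u-\frac{\bkappa}{\bchi}\partial_\eta u$, which is exactly the $M=0$, $N=2$ instance of~\eqref{pspn} with $[a^2_0,a^2_1,a^2_2,b^2_0,b^2_1]=[-k^2,0,-\bchi^{-2},-\bkappa\bchi^{-1},0]$. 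The structural point to record here is that every coefficient $a^N_i,b^N_j$ is a function of $\eta$ alone; none depends on $s$.

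Next I would dispose of the parameter $M$ once and for all. Because the coefficients depend only on $\eta$, the operator $\partial_s$ commutes both with multiplication by them and with $\partial_\eta$. Hence applying $\partial_s^M$ to the pure-normal relation $\partial_\eta^N u=\sum_i a^N_i\partial_s^i u+\sum_j b^N_j\partial_s^j\partial_\eta u$ slides $\partial_s^M$ straight through the coefficients and raises each $\partial_s$-order by $M$, yielding~\eqref{pspn} verbatim. So it suffices to prove the $M=0$ identity together with the claimed coefficient recurrences.

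The recurrences I would obtain by induction on $N$, the base case $N=2$ being the display above. For the step, apply $\partial_\eta$ to the order-$N$ relation. The product rule turns each summand into $(\partial_\eta a^N_i)\partial_s^i u$ and $a^N_i\partial_s^i\partial_\eta u$ (and likewise for the $b$ block), the only awkward object being $\partial_s^j\partial_\eta^2 u$ arising from the $b^N_j$ block; this I eliminate by inserting the base ($N=2$) relation, which replaces $\partial_\eta^2 u$ by a combination of $u,\partial_s u,\partial_s^2u,\partial_\eta u,\partial_s\partial_\eta u$. Collecting the coefficient of each monomial $\partial_s^i u$ gives the $a$-recurrence, and the coefficient of each $\partial_s^j\partial_\eta u$ gives the $b$-recurrence. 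The index shifts are what make the formulas staggered: $a^2_1$ advances the $\partial_s$-order by one and $a^2_2$ by two (producing the $b^N_{j-1}a^2_1$ and $b^N_{j-2}a^2_2$ contributions), $b^2_0$ keeps the $\partial_s\partial_\eta$-order fixed (the $b^N_j b^2_0$ contribution), and $b^2_1$ advances it by one (the $b^N_{j-1}b^2_1$ contribution).

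The part that needs real care — and where I expect the only genuine difficulty — is the bookkeeping at the extreme indices. I would check separately that the new top coefficient $a^{N+1}_{N+1}$ is fed solely by $b^N_{N-1}a^2_2$ (no $\partial_\eta a^N$ reaches that order), that $a^{N+1}_N$ loses any $b^N_N a^2_0$ contribution because $b^N_N$ does not exist, and that the $\partial_\eta^2u$-substitution acts only on the $b$ block so that the $a^N_i$ terms contribute purely through $\partial_\eta a^N_i$ and through $a^N_i\partial_s^i\partial_\eta u$. Tracking these boundary cases, and making sure no $\partial_s^j\partial_\eta u$ term with $j$ near $N$ is dropped or double-counted, is the error-prone step; once the index ranges are pinned down the identities follow by matching coefficients, and the fact that all coefficients depend only on $\bkappa$, $\bchi$ and $k$ is immediate from the base values and the recurrence.
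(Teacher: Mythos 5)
Your proposal is correct and is precisely the ``direct verification'' the paper invokes without writing out: specialize \eqref{N2} to $\partial_s\bkappa=\partial_s\bchi=0$, observe that all coefficients depend on $\eta$ only so that $\partial_s^M$ commutes through, and then apply $\partial_\eta$ to the order-$N$ relation, eliminate $\partial_s^j\partial_\eta^2u$ via the $N=2$ relation, and match coefficients.

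One remark worth recording: your bookkeeping for the $b$-block is actually \emph{more} careful than the printed statement. Matching the coefficient of $\partial_s^j\partial_\eta u$ the way you describe gives
\begin{equation*}
b^{N+1}_j \;=\; a^N_j + \partial_\eta b^N_j + b^N_j b^2_0 + b^N_{j-1}b^2_1,\qquad 1\le j\le N-1,
\end{equation*}
i.e.\ the term $b^N_j b^2_0$ (your ``$b^2_0$ keeps the $\partial_s\partial_\eta$-order fixed'' contribution) appears for every $j\le N-1$, not only for $j=0$ as in \eqref{CircleCoef}. The printed recurrence drops it for $j\ge1$, and this is not harmless on a circle: for instance $b^3_2=a^2_2=-\bchi^{-2}\neq0$, so the coefficient of $\partial_s^2\partial_\eta u$ in $\partial_\eta^4u$ is $a^3_2+\partial_\eta b^3_2+b^3_2b^2_0$, which differs from the printed formula by $b^3_2b^2_0=\bkappa\bchi^{-3}$. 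Your version is the internally consistent one (it reduces to the printed $j=0$ case and to Corollary~\ref{Coro31}), so keep the $b^N_jb^2_0$ term. Everything else in your plan --- the treatment of $M$ by commutation, the boundary cases $a^{N+1}_{N+1}=b^N_{N-1}a^2_2$ and the absence of a $b^N_Na^2_0$ term in $a^{N+1}_N$ --- checks out exactly.
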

The proof follows by a direct verification.

From the discussion above, we see that computing the shape derivatives requires both the Dirichlet and Neumann data of the scattered field on the boundary of the scatterer. Since these two types of data are not given simultaneously, we derive the relationship between $\partial^j_{\btau}u$ and $\partial^j_{\btau}\partial_{\mathbf{n}}u$ based on Green's theorem~\cite{colton2019inverse}.
\begin{theorem}\label{T32}
    Let $u$ be the scattered field of the scattering problem~\eqref{scattering_eqn2}. The tangential derivatives $\partial^j_{\btau}\partial_{\mathbf{n}}u$ and $\partial^j_{\btau}u$ on the boundary $\bGamma$ determine each other for any $j = 0,1,2,\dots$.
\end{theorem}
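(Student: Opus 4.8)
The plan is to turn the mutual-determination claim into an invertible boundary integral identity between the two traces and then differentiate it tangentially. Taking $\alpha\equiv 1$ without loss of generality, the scattered field $u$ is a radiating solution of the Helmholtz equation in $D_{\rm ex}$, so Green's representation theorem~\cite{colton2019inverse} applied in $D_{\rm ex}$ together with the radiation condition~\eqref{rad_cond} gives
\[
    u(\mathbf{x}) = \big(\mathbf{D}\,u|_{\bGamma}\big)(\mathbf{x}) - \big(\mathbf{S}\,\partial_{\mathbf{n}}u|_{\bGamma}\big)(\mathbf{x}), \qquad \mathbf{x}\in D_{\rm ex}.
\]
Letting $\mathbf{x}\to\bGamma$ from $D_{\rm ex}$ and invoking the jump relations~\eqref{boundpotential}, in which the double layer acquires the $+\tfrac12$ term, the identity collapses to the boundary equation
\[
    \mathcal{S}\big(\partial_{\mathbf{n}}u\big) = \Big(\mathcal{D}-\tfrac12 I\Big)u \qquad \text{on } \bGamma.
\]
I would take this single relation as the backbone of the argument: it couples the Dirichlet trace $u|_{\bGamma}$ and the Neumann trace $\partial_{\mathbf{n}}u|_{\bGamma}$ through operators that act intrinsically on $\bGamma$.

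Reading the identity in both directions yields the determination. Given $u|_{\bGamma}$, the right-hand side is known and $\partial_{\mathbf{n}}u$ is recovered by inverting the single layer operator $\mathcal{S}$; conversely, given $\partial_{\mathbf{n}}u|_{\bGamma}$, the left-hand side is known and $u|_{\bGamma}$ is recovered by inverting $\mathcal{D}-\tfrac12 I$. By the Riesz--Fredholm theory both operators are bijections between the natural trace spaces on $\bGamma$ except when $k^2$ meets a discrete set of irregular wavenumbers (interior eigenvalues of $-\Delta$ in $D$), where the representation degenerates. Establishing this invertibility and, above all, removing the frequency restriction so that the conclusion holds for every $k$ is the step I expect to be the main obstacle. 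I would dispose of it in one of two equivalent ways: either replace the plain Green ansatz by a combined single/double layer representation, whose boundary integral operator is invertible for all $k$ and thus provides a uniform solvability statement; or appeal directly to the uniqueness of the exterior Dirichlet and Neumann problems, which always holds by Rellich's lemma and unique continuation and shows that either trace of a radiating solution determines $u$ throughout $D_{\rm ex}$, hence fixes the complementary trace.

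Finally I would transfer the correspondence to the tangential derivatives. Since $\partial_{\btau}$ differentiates along $\bGamma$, knowing the function $u|_{\bGamma}$ is equivalent to knowing the whole family $\{\partial_{\btau}^{\,j}u\}_{j\ge 0}$, and likewise $\partial_{\mathbf{n}}u|_{\bGamma}$ is equivalent to $\{\partial_{\btau}^{\,j}\partial_{\mathbf{n}}u\}_{j\ge 0}$; differentiating the boundary identity $j$ times along $\btau$ then propagates the mutual determination to each order $j$, which is exactly the assertion that $\partial_{\btau}^{\,j}u$ and $\partial_{\btau}^{\,j}\partial_{\mathbf{n}}u$ determine one another for all $j=0,1,2,\dots$. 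The penetrable case of~\eqref{scattering_eqn1} is handled by the same reasoning, applied to the interior and exterior traces separately and coupled through the transmission conditions.
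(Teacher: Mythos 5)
Your proposal is correct and starts from the same backbone as the paper's proof: Green's representation formula for the radiating field in $D_{\rm ex}$ combined with the jump relations, giving $\mathcal{S}\partial_{\mathbf{n}}u = -\tfrac12 u + \mathcal{D}u$ on $\bGamma$, which is exactly the paper's identity \eqref{Green0}. After that the two arguments diverge. You settle the case $j\ge 1$ by a soft observation --- the traces $u|_{\bGamma}$ and $\partial_{\mathbf{n}}u|_{\bGamma}$ already encode all of their own tangential derivatives, and the $j=0$ correspondence is guaranteed by unique solvability of the exterior Dirichlet and Neumann problems --- which proves the theorem as stated and is in fact more careful than the paper about the possible non-invertibility of $\mathcal{S}$ and $\mathcal{D}-\tfrac12 I$ at irregular wavenumbers, an issue the paper leaves implicit. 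The paper instead differentiates the $j=0$ identity in $t$ and uses the splitting $\partial_t=(\partial_t+\partial_s)-\partial_s$ together with integration by parts to move derivatives onto the densities; by induction this yields the explicit recursion \eqref{un} in terms of the smoother-kernel operators $\mathcal{S}^{(p)}$ and $\mathcal{D}^{(l)}$ defined in \eqref{movidiff}, which is the relation the numerical scheme of Section~\ref{ShapeTaylor} actually consumes. One caution on your final step: ``differentiating the boundary identity $j$ times along $\btau$'' done naively under the integral sign produces increasingly singular kernels $\partial_t^j\mathbf{G}(t,s)$; the commutator trick is precisely how the paper avoids this, so while your argument suffices for the determination statement, you would need that additional device to obtain a usable computational formula.
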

\begin{proof}
    For the case of $j = 0$, according to Green's representation theorem and the jump relation of the double layer potential operator given by equation~\eqref{boundpotential}, $u$ and $\partial_{\mathbf{n}}u$ satisfy
    \begin{eqnarray}\label{Green0}
        \int_{\bGamma}\mathbf{G}(t,s)\partial_{\mathbf{n}}u(s)ds = -\frac{1}{2}u(t) + \int_{\bGamma}\frac{\partial\mathbf{G}(t,s)}{\partial\mathbf{n}(s)}u(s)ds, \mbox{ on }\bGamma,
    \end{eqnarray}
    which can be represented as $\mathcal{S}\partial_{\mathbf{n}}u = -\frac{1}{2}u + \mathcal{D}u$.
    
    For $j = 1$, by taking derivatives w.r.t. $t$ on both sides of equation~\eqref{Green0} and integrating by parts, we get
    \begin{eqnarray}\label{Green1}
    \begin{aligned}
        &\int_{\bGamma}\partial_t\mathbf{G}(t,s)\partial_{\mathbf{n}}u(s)ds\\
        =& \int_{\bGamma}(\partial_t + \partial_s)\mathbf{G}(t,s)\partial_{\mathbf{n}}u(s)ds + \int_{\bGamma} \mathbf{G}(t,s)\partial_s\partial_{\mathbf{n}}u(s)ds\\
        =&-\frac{1}{2}\partial_tu(t) + \int_{\bGamma}(\partial_t+\partial_s)\frac{\partial\mathbf{G}(t,s)}{\partial\mathbf{n}(s)}u(s)ds + \int_{\bGamma}\frac{\partial\mathbf{G}(t,s)}{\partial\mathbf{n}(s)}\partial_su(s)ds.
    \end{aligned} 
    \end{eqnarray}
    Therefore, $\partial_{\btau}u$ and $\partial_{\btau}\partial_{\mathbf{n}}u$ satisfy
    \begin{eqnarray}
        \mathcal{S}\partial_{\btau}\partial_{\mathbf{n}}u + \mathcal{S}^{(1)}\partial_{\mathbf{n}}u = -\frac{1}{2}\partial_{\btau}u + \mathcal{D}\partial_{\btau}u + \mathcal{D}^{(1)}u.
    \end{eqnarray}
    Here the boundary operators $\mathcal{S}^{(j)}$, $\mathcal{D}^{(j)}$ for $j=1,2,\dots$ are defined by
    \begin{subequations}\label{movidiff}
    \begin{align}
         \left(\mathcal{S}^{(j)}\rho\right)(t): =& \int_{\bGamma}(\partial_t + \partial_s)^j\mathbf{G}(t,s)\rho(s)ds,\label{Single}\\
        \left(\mathcal{D}^{(j)}\rho\right)(t): =& \int_{\bGamma}(\partial_t + \partial_s)^j\frac{\partial\mathbf{G}(t,s)}{\partial\mathbf{n}(s)}\rho(s)ds.\label{Double}
    \end{align}
    \end{subequations}
    By induction, for $j>1$, $\partial^j_{\btau}u$ and $\partial^j_{\btau}\partial_{\mathbf{n}}u$ satisfy
    \begin{eqnarray}\label{un}
        \sum_{p=0}^{j}\mathcal{S}^{(p)}\partial^{j-p}_{\btau}\partial_{\mathbf{n}}u = -\frac{1}{2}\partial^{j}_{\btau}u + \sum_{l = 0}^{j}\mathcal{D}^{(l)}\partial^{j-l}_{\btau}u.
    \end{eqnarray}
\end{proof}

In the case when $\bGamma$ is a circle, the boundary operators $\mathcal{S}^{(j)}$ and $\mathcal{D}^{(j)}$ for $j\ge1$ are all zero. Thus equation~\eqref{un} can be simplified to
\begin{eqnarray}\label{CircleDtN}
    \mathcal{S}\partial^j_{\btau}\partial_{\mathbf{n}}u = \left(-\frac{1}{2}\mathcal{I} + \mathcal{D}\right)\partial^j_{\btau}u,\qquad j\in\mathbb{N}.
\end{eqnarray}

\begin{remark}
 All the results derived in this section can be extended to the three dimensional case. One simply needs to replace the curve-based geometry with the corresponding surface-based geometry.
\end{remark}

\section{Random boundary scattering problem}\label{Randomproblem}

This section exhibits an application of the shape Taylor expansion in uncertainty quantification. We formulate the scattering problem with random boundaries based on model~\eqref{scattering_eqn2}. The random perturbations on boundary $\bGamma$ are given by a set of independent random variables $\bomega:=[\omega_1,...,\omega_m]$. We assume that $\bomega$ belongs to an $m$ dimensional probability space $(\bOmega_P,\mathcal{F}, P)$, where the $j$th component $\omega_j$ follows a uniform distribution on $[-1,1]$. The random perturbed boundary is given by
\begin{eqnarray}\label{randbound}
    \bGamma_{\epsilon\bomega}: = \bGamma + \epsilon\sum_{j = 1}^m\omega_j\mathbf{v}_j.
\end{eqnarray}
The statistical properties of a random field can be described by the moments given by the following definition.
\begin{definition}\label{def1}
 Let $u$ be the scattered field for the scattering problem~\eqref{scattering_eqn2} with random boundary perturbations. The $n$th order moment for $u$ is defined as
    \begin{eqnarray}\label{mome}
    \mathbb{M}^n[u](\mathbf{x}) := \int_{\bOmega_P} \big[u(\mathbf{x},\bGamma_{\epsilon\bomega})\big]^nP(\bomega)d\bomega,\quad n\ge 1.
\end{eqnarray}
Specifically, the first order moment is the expectation
\begin{eqnarray}
    \mathbb{E}[u](\mathbf{x}): = \mathbb{M}^1[u](\mathbf{x}).
\end{eqnarray}
The $n$th order central moment is defined as
\begin{eqnarray}\label{momecen}
    \mathbb{M}^n_0[u](\mathbf{x}) := \int_{\bOmega_P} \big(u(\mathbf{x},\bGamma_{\epsilon\bomega}) - \mathbb{E}[u(\mathbf{x})]\big)^nP(\bomega)d\bomega,\quad n\ge 1.
\end{eqnarray}
Specifically, the second order central moment is the variance
\begin{eqnarray}
    \mathbb{VAR}[u](\mathbf{x}): = \mathbb{M}^2_0[u](\mathbf{x}).
\end{eqnarray}
\end{definition}

According to equation~\eqref{shapeTaylorMultiV}, the multivariate Taylor expansion under the random perturbed boundary $\bGamma_{\epsilon\bomega}$ is 
\begin{eqnarray}\label{multiTaylor}
    \begin{aligned}
        u(\mathbf{x},\bGamma_{\epsilon\bomega}) =& u(\mathbf{x}) + \epsilon\sum_{i = 1}^m\omega_i\delta_{\mathbf{v}_i}u(\mathbf{x}) + \frac{\epsilon^2}{2!}\sum_{i,j=1}^m\omega_i\omega_j\delta_{[\mathbf{v}_i,\mathbf{v}_j]}u(\mathbf{x})\\
        &+ ... + \frac{\epsilon^N}{N!}\sum_{i_1,...,i_N = 1}^m\left(\prod \limits_{j=1}^N\omega_{i_j}\right)\delta_{[\mathbf{v}_{i_1},...,\mathbf{v}_{i_N}]}u(\mathbf{x}) + \mathcal{O}\left(\epsilon^{N+1}\right).
    \end{aligned}
\end{eqnarray}
Since the variables $\omega_j$ are mutually independent and uniformly distributed over the interval $[-1,1]$, it follows that
\begin{eqnarray}\label{prop}
    \int_{\bOmega_P}\prod\limits_{n = 1}^q\omega_{i_n}P(\bomega)d\bomega = 0,
\end{eqnarray}
when $q$ is any positive odd integer. Combining equations~\eqref{randbound},~\eqref{multiTaylor} and~\eqref{prop}, we can estimate the moments of the random field based on the second order shape Taylor expansion.
\begin{theorem}
     Let $u(\mathbf{x},\bGamma_{\epsilon\bomega})$ be the scattered field of the scattering problem~\eqref{scattering_eqn2} with random boundaries. Then the first order moment satisfies
    \begin{eqnarray}
    \mathbb{E}[u](\mathbf{x}) = \mathbb{M}^1[u](\mathbf{x}) = u(\mathbf{x}) + \frac{\epsilon^2}{3!}\sum_{i = 1}^m\delta_{[\mathbf{v}_i,\mathbf{v}_i]}u(\mathbf{x}) + \mathcal{O}\left(\epsilon^4\right).
    \end{eqnarray}
    The second order moment satisfies
    \begin{eqnarray}
    \mathbb{M}^2[u](\mathbf{x}) = u^2(\mathbf{x}) + \frac{\epsilon^2}{3}\sum_{i = 1}^m\Big(\delta_{\mathbf{v}_i}u^2(\mathbf{x}) + u(\mathbf{x})\delta_{[\mathbf{v}_i,\mathbf{v}_i]}u(\mathbf{x})\Big) + \mathcal{O}\left(\epsilon^4\right).
    \end{eqnarray}
       Inductively, the $n$th order  moment with $n>2$ of $u$ satisfies
    \begin{eqnarray}\label{Es4Mn}
    \begin{aligned}
        \mathbb{M}^n[u](\mathbf{x}) =& u^n(\mathbf{x})+ \frac{\epsilon^2}{3}\sum_{i = 1}^m\left(\binom{n}{2}u^{n-2}(\mathbf{x})\delta_{\mathbf{v}_i}u^2(\mathbf{x}) + \binom{n}{1}u^{n-1}(\mathbf{x})\frac{1}{2}\delta_{[\mathbf{v}_i,\mathbf{v}_i]}u(\mathbf{x})\right)\\
        &+ \mathcal{O}\left(\epsilon^4\right).
    \end{aligned}
\end{eqnarray}    
\end{theorem}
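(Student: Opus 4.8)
The plan is to derive all three moment formulas from a single source, the multivariate shape Taylor expansion \eqref{multiTaylor}, combined with the elementary moment identities of the uniform law on $[-1,1]$. First I would record that, since the $\omega_j$ are independent with the symmetric density $\frac{1}{2}\mathbf{1}_{[-1,1]}$, one has $\int_{\bOmega_P}\omega_j P(\bomega)d\bomega = 0$ and $\int_{\bOmega_P}\omega_i\omega_j P(\bomega)d\bomega = \frac{1}{3}\delta_{ij}$, where $\delta_{ij}$ is the Kronecker symbol; the first is symmetry, the second follows from $\int_{-1}^1\omega^2\,\frac{d\omega}{2}=\frac13$ together with independence killing the cross terms. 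More generally, \eqref{prop} extends to the statement that the $P$-integral of any monomial $\prod_k\omega_{i_k}^{a_k}$ vanishes as soon as one exponent $a_k$ is odd.

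The second ingredient is a parity observation that explains the $\mathcal{O}(\epsilon^4)$ remainder. In \eqref{multiTaylor} the coefficient of $\epsilon^p$ is, for each $p$, a homogeneous polynomial of degree $p$ in $\omega_1,\dots,\omega_m$. Consequently, raising $u(\mathbf{x},\bGamma_{\epsilon\bomega})$ to the $n$th power and regrouping by powers of $\epsilon$, the coefficient of $\epsilon^p$ remains homogeneous of degree $p$ in $\bomega$. When $p$ is odd such a polynomial is a sum of monomials in which at least one exponent is odd, so by the previous paragraph its $P$-integral is zero. Hence all odd powers of $\epsilon$ disappear under the expectation, and the first neglected term is already of order $\epsilon^4$.

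With these in hand the three formulas follow by direct computation. For the expectation I would integrate \eqref{multiTaylor} termwise: the $\epsilon^0$ term reproduces $u(\mathbf{x})$, the $\epsilon^1$ term vanishes by $\int\omega_i P=0$, and the $\epsilon^2$ term gives $\frac{\epsilon^2}{2!}\sum_{i,j}\frac{1}{3}\delta_{ij}\,\delta_{[\mathbf{v}_i,\mathbf{v}_j]}u=\frac{\epsilon^2}{3!}\sum_i\delta_{[\mathbf{v}_i,\mathbf{v}_i]}u$, the claimed first moment. For the $n$th moment I would write the perturbed field as $u+g$ with $g=\epsilon\sum_i\omega_i\delta_{\mathbf{v}_i}u+\tfrac{\epsilon^2}{2}\sum_{i,j}\omega_i\omega_j\delta_{[\mathbf{v}_i,\mathbf{v}_j]}u+\mathcal{O}(\epsilon^3)$ and expand $(u+g)^n=u^n+nu^{n-1}g+\binom{n}{2}u^{n-2}g^2+\mathcal{O}(g^3)$. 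Keeping only terms through $\epsilon^2$ leaves $u^n+nu^{n-1}\big(\epsilon U_1+\tfrac{\epsilon^2}{2}U_2\big)+\binom{n}{2}u^{n-2}\epsilon^2U_1^2$, where $U_1=\sum_i\omega_i\delta_{\mathbf{v}_i}u$ and $U_2=\sum_{i,j}\omega_i\omega_j\delta_{[\mathbf{v}_i,\mathbf{v}_j]}u$. Integrating and using $\int U_1P=0$, $\int U_2P=\tfrac13\sum_i\delta_{[\mathbf{v}_i,\mathbf{v}_i]}u$ and $\int U_1^2P=\tfrac13\sum_i(\delta_{\mathbf{v}_i}u)^2$ produces exactly \eqref{Es4Mn}, under the shorthand $\delta_{\mathbf{v}_i}u^2:=(\delta_{\mathbf{v}_i}u)^2$; the second moment is the special case $n=2$.

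The computation is essentially routine, so there is no serious obstacle; the only point demanding care is the parity argument that upgrades the remainder from the naively expected $\mathcal{O}(\epsilon^3)$ to $\mathcal{O}(\epsilon^4)$. This hinges on the observation that the $\epsilon$-degree and the $\bomega$-degree of each term coincide, so that symmetry of the uniform law annihilates every odd power of $\epsilon$. A secondary bookkeeping matter is matching the binomial coefficients $\binom{n}{2}$ and $\binom{n}{1}$ and the factor $\tfrac12$ in \eqref{Es4Mn} against those produced by the binomial expansion, along with clarifying the notational shorthand $\delta_{\mathbf{v}_i}u^2$.
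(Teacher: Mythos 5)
Your proposal is correct and follows essentially the same route the paper intends: it combines the multivariate expansion \eqref{multiTaylor} with the vanishing of odd-degree monomials (the extension of \eqref{prop}) and the identity $\int_{-1}^{1}\omega^2\,\tfrac{d\omega}{2}=\tfrac13$, then expands $(u+g)^n$ binomially and integrates term by term; the coefficients $\binom{n}{2}$, $\binom{n}{1}\tfrac12$ and the interpretation $\delta_{\mathbf{v}_i}u^2=(\delta_{\mathbf{v}_i}u)^2$ all check out. Your parity observation that the $\epsilon^p$ coefficient is homogeneous of degree $p$ in $\bomega$ is exactly what justifies the $\mathcal{O}(\epsilon^4)$ remainder, and is a point the paper leaves implicit.
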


In general, when estimating the $n$th moment of the random field using the $N$th order shape Taylor expansion, the approximation error is of order $\mathcal{O}(\epsilon^{N+2})$, where $N$ is a positive even integer.

 We also give the result for estimating the central moments based on the second order shape Taylor expansion.
\begin{theorem}\label{centralmom}
    Let $u(\mathbf{x},\bGamma_{\epsilon\bomega})$ be the total field of the scattering problems with random boundaries.
    The first order central moment is zero since that
\begin{eqnarray}\label{centmom_1}
    \mathbb{M}_0^1[u](\mathbf{x}) = \mathbb{E}[u(\mathbf{x},\bGamma_{\epsilon\bomega}) - \mathbb{E}[u](\mathbf{x})] \equiv 0.
\end{eqnarray}
The second order central moment, i.e., the variance, satisfies
\begin{eqnarray}\label{centmom_2}
    \begin{aligned}
        \mathbb{M}_0^2[u](\mathbf{x})
        =& \mathbb{E}\left[\left(u(\mathbf{x},\bGamma_{\epsilon\bomega}) - \mathbb{E}[u](\mathbf{x})\right)^2\right]\\
        =&\mathbb{E}\left[\left(\epsilon\sum_{i = 1}^m\omega_i\delta_{\mathbf{v}_i}u(\mathbf{x}) + \frac{\epsilon^2}{2!}\sum_{i, j = 1}^{m}\omega_i\omega_j\delta_{[\mathbf{v}_i,\mathbf{v}_j]}u(\mathbf{x})\right.\right.\\
        &- \left.\left.\frac{\epsilon^2}{3!}\sum_{i = 1}^m\delta_{[\mathbf{v}_i,\mathbf{v}_i]}u(\mathbf{x}) + \mathcal{O}\left(\epsilon^4\right)\right)^2\right]\\
        =&\frac{\epsilon^2}{3}\sum_{i = 1}^m\delta_{\mathbf{v}_i}u^2(\mathbf{x}) + \mathcal{O}\left(\epsilon^4\right).
    \end{aligned}
\end{eqnarray}
 For the higher order central moment, if $n$ is even, the $n$th order central moment satisfies
\begin{eqnarray}
    \begin{aligned}
        \mathbb{M}_0^n[u](\mathbf{x}) =&\mathbb{E}\left[\left(u(\mathbf{x},\bGamma_{\epsilon\bomega}) - \mathbb{E}[u](\mathbf{x})\right)^n\right]\\
        =&\mathbb{E}\left[\left(\epsilon\sum_{i = 1}^m\omega_i\delta_{\mathbf{v}_i}u(\mathbf{x}) + \frac{\epsilon^2}{2!}\sum_{i, j = 1}^{m}\omega_i\omega_j\delta_{[\mathbf{v}_i,\mathbf{v}_j]}u(\mathbf{x})\right.\right.\\
        &\left.\left.- \frac{\epsilon^2}{3!}\sum_{i = 1}^m\delta_{[\mathbf{v}_i,\mathbf{v}_i]}u(\mathbf{x}) + \mathcal{O}\left(\epsilon^4\right)\right)^n\right]\\
        =&\epsilon^n\mathbb{E}\left[\left(\sum_{i = 1}^m\omega_i\delta_{\mathbf{v}_i}u\right)^n\right](\mathbf{x}) + \mathcal{O}\left(\epsilon^{n+2}\right).
    \end{aligned}
\end{eqnarray}
If $n$ is odd, the central moment satisfies
\begin{eqnarray}
    \begin{aligned}
        \mathbb{M}_0^n[u](\mathbf{x}) =&\mathbb{E}\left[\left(u(\mathbf{x},\bGamma_{\epsilon\bomega}) - \mathbb{E}[u](\mathbf{x})\right)^n\right]\\
        =&\epsilon^{n+1}\binom{n}{1}\mathbb{E}\left[\left(\sum_{i=1}^m\omega_i\delta_{\mathbf{v}_i}u\right)^{n-1}\left(\frac{1}{2!}\sum_{i, j = 1}^{m}\omega_i\omega_j\delta_{[\mathbf{v}_i,\mathbf{v}_j]}u(\mathbf{x})\right.\right.\\
        &\left.\left.- \frac{1}{3!}\sum_{i = 1}^m\delta_{[\mathbf{v}_i,\mathbf{v}_i]}u(\mathbf{x})\right)\right]\\
        &+\mathcal{O}\left(\epsilon^{n+3}\right).
    \end{aligned}
\end{eqnarray}
\end{theorem}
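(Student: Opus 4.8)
The plan is to reduce all cases ($n=1$, the variance, and general even/odd $n$) to a single parity principle for the randomness, so that the whole statement follows from one bookkeeping argument together with~\eqref{prop}. First I would subtract the mean: combining the multivariate expansion~\eqref{multiTaylor} with the $\epsilon$-expansion of $\mathbb{E}[u]$ established in the preceding theorem, I write the centered field (understood as an asymptotic expansion with the appropriate $\mathcal{O}$ remainder)
\[
u(\mathbf{x},\bGamma_{\epsilon\bomega}) - \mathbb{E}[u](\mathbf{x}) = \sum_{N\ge 1}\epsilon^N D_N(\bomega,\mathbf{x}),
\]
where $D_1 = \sum_i\omega_i\delta_{\mathbf{v}_i}u =: A$ and $D_2 = \tfrac{1}{2!}\sum_{i,j}\omega_i\omega_j\delta_{[\mathbf{v}_i,\mathbf{v}_j]}u - \tfrac{1}{3!}\sum_i\delta_{[\mathbf{v}_i,\mathbf{v}_i]}u =: B$. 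The decisive structural observation is that the $\epsilon^N$ coefficient of the raw expansion~\eqref{multiTaylor} is homogeneous of degree $N$ in $\bomega$ (the shape derivatives being deterministic), while $\mathbb{E}[u]$ contributes only even powers of $\epsilon$ with $\bomega$-independent coefficients, its odd-$\epsilon$ coefficients vanishing by~\eqref{prop}. Hence every monomial of $D_N$ has $\bomega$-degree of the same parity as $N$: in particular $A$ is $\bomega$-odd and $B$ is $\bomega$-even.

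The key step is then the parity correspondence. Expanding $\big(u-\mathbb{E}[u]\big)^n=\big(\sum_N\epsilon^N D_N\big)^n$ multinomially, a generic term $\epsilon^{N_1+\dots+N_n}D_{N_1}\cdots D_{N_n}$ has total $\bomega$-degree of parity equal to the parity of $N_1+\dots+N_n$, i.e.\ equal to the parity of its $\epsilon$-exponent. After taking the expectation and invoking~\eqref{prop}, every term sitting at an odd power of $\epsilon$ integrates to zero, so $\mathbb{M}_0^n[u]$ carries only even powers of $\epsilon$. This instantly yields $\mathbb{M}_0^1[u]=\mathbb{E}[u-\mathbb{E}[u]]\equiv 0$ by linearity, and, more importantly, it is the single fact responsible for every error exponent in the statement.

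It then remains to read off the leading terms. The minimal $\epsilon$-power in the expansion is $\epsilon^n$, obtained by choosing $D_1=A$ in every factor. For even $n$ this power is even and survives, giving the leading term $\epsilon^n\,\mathbb{E}\big[A^n\big]$; the next power $\epsilon^{n+1}$ is odd and dies, so the remainder is $\mathcal{O}(\epsilon^{n+2})$. For odd $n$ the power $\epsilon^n$ is odd and vanishes, and the surviving leading contribution is at $\epsilon^{n+1}$, coming from choosing $B$ in exactly one of the $n$ factors and $A$ in the rest; this gives $\epsilon^{n+1}\binom{n}{1}\mathbb{E}\big[A^{n-1}B\big]$ with $B$ written out as above, while the next surviving power is $\epsilon^{n+3}$, yielding remainder $\mathcal{O}(\epsilon^{n+3})$. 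For the variance ($n=2$) I would evaluate the leading coefficient directly: using independence and $\mathbb{E}[\omega_i^2]=\tfrac13$ for the uniform law on $[-1,1]$, one obtains $\mathbb{E}[A^2]=\tfrac13\sum_i(\delta_{\mathbf{v}_i}u)^2$, reproducing~\eqref{centmom_2}, while the order-$\epsilon^3$ cross term $\mathbb{E}[AB]$ vanishes by parity.

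The main obstacle is not any single computation but making the parity correspondence airtight. Specifically, I must verify that the centering correction preserves the parity grading—that subtracting $\mathbb{E}[u]$, whose $\epsilon$-expansion carries only even powers with $\bomega$-independent coefficients, leaves each odd-index $D_N$ purely $\bomega$-odd and each even-index $D_N$ purely $\bomega$-even. The only place where more than the crudest count is needed is the odd-$n$ case, where I must confirm that both contributions at order $\epsilon^{n+2}$ vanish: the $A^{n-2}B^2$ term (of odd $\bomega$-degree) and the $A^{n-1}D_3$ term (also odd, since $D_3$ is $\bomega$-odd). Once the grading is established these are immediate from~\eqref{prop}, but the bookkeeping of which mixed products land at each power of $\epsilon$, together with the binomial multiplicities, is where care is required.
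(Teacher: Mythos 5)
Your proposal is correct and follows essentially the same route as the paper: substitute the multivariate shape Taylor expansion \eqref{multiTaylor}, subtract the expansion of $\mathbb{E}[u]$, expand the $n$th power, and invoke \eqref{prop} to kill all terms of odd total $\bomega$-degree, which by your (valid) parity correspondence are exactly the odd powers of $\epsilon$. The explicit parity grading and the check that both $\epsilon^{n+2}$ contributions vanish in the odd-$n$ case are a slightly more systematic write-up of the bookkeeping the paper leaves implicit, but the argument is the same.
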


Similar to the case for higher order moments, the higher order shape Taylor expansion can yield a more accurate estimation for central moments. In particular, let $(n,N)$ be a pair of positive integers satisfying $n+N$ is odd. When using the $N$th order expansion to estimate the $n$th order central moment of a random field $u$, the approximation error is $\mathcal{O}(\epsilon^{n+N+1})$.

According to the conclusions above, one can see that the accuracy of estimating the moments $\mathbb{M}^n[u]$ is mainly determined by the order of the shape Taylor expansion. For the central moments $\mathbb{M}_0^n[u]$, the accuracy depends on both the moment order $n$ and the expansion order $N$.

\section{Numerical examples}\label{Numerical}
The numerical examples in this section illustrate the properties of the shape Taylor expansion and its applications in uncertainty quantification. Here we employ the boundary integral method~\cite{kress2018inverse} to solve the scattering problems with two types of incident waves as given by equations~\eqref{incwave1} and~\eqref{incwave2}. The first part of this section investigates the approximation properties of the shape Taylor expansion in the scattering problems with sound-hard, sound-soft, and transmission boundary conditions. The second part applies the shape Taylor expansion to estimate the moments of an impedance scattering problem with randomly perturbed boundary.

\subsection{Numerical implementation}
In this subsection, we briefly introduce the boundary integral equation with Nystr{\"o}m discretization, based on trigonometric interpolation~\cite{colton2019inverse}, to obtain the scattered field and its shape derivatives numerically. 

For the sound-soft scattering problems, the scattered field $u$ is represented by the combined layer potential
\begin{eqnarray}\label{Inteq1}
    u(\mathbf{x}) = \left(\mathbf{D}\rho\right)(\mathbf{x}) - ik\left(\mathbf{S}\rho\right)(\mathbf{x}),\quad\mathbf{x}\in D_{\rm ex}.
\end{eqnarray}
For the sound-hard scattering problems, $u$ is represented by the single layer potential
\begin{eqnarray}\label{Inteq2}
     u(\mathbf{x})=\left(\mathbf{S}\rho\right)(\mathbf{x}),\quad\mathbf{x}\in D_{\rm ex}.
\end{eqnarray}
For the impedance and transmission problems, $u$ is given by Green's representation formula
\begin{eqnarray}\label{Inteq3}
    u(\mathbf{x}) = \left(\mathbf{D}u\right)(\mathbf{x})- \left(\mathbf{S}\frac{\partial u}{\partial\mathbf{n}}\right)(\mathbf{x}),\quad\mathbf{x}\in D_{\rm ex}.
\end{eqnarray}
The representations of $u$ in equations~\eqref{Inteq1},~\eqref{Inteq2} and~\eqref{Inteq3} lead to a second kind integral equation (or a second kind integral system for transmission problems under representation~\eqref{Inteq3}) on the boundary, which is given in the form of
\begin{eqnarray}\label{IntEq}
    C\rho(s) + \int_{\bGamma}K(t,s)\rho(t)dt = f(s),
\end{eqnarray}
where $\rho$ is the unknown density function, $C$ is a nonzero constant, $K$ is the singular integral kernel and $f$ is a given function that depends on the incident wave. An equidistant mesh with nodes $t_j = j\Delta t$ for $j = 1,\dots,N_t$ is introduced to discrete equation~\eqref{IntEq}. We choose $N_t = 400$ in the following examples. Based on the trigonometric interpolation~\cite{colton2019inverse}, we can solve for $\rho_j:=\rho(t_j)$ and compute the scattered field and its corresponding shape derivatives with spectral accuracy. 

 In Subsections 5.2-5.4, to compare the error between the shape Taylor expansion of $u(\cdot,\bGamma)$ and the perturbed field $u(\cdot,\bGamma_{\rm per})$, we consider the case where $\bGamma$ is perturbed by a single velocity field $\mathbf{v}$ with varying perturbation magnitudes $\epsilon$, in which the perturbed boundary is denoted as $\bGamma_{\epsilon,\mathbf{v}}$. A set of observation points $\mathbf{x}_1,\mathbf{x}_2,...,\mathbf{x}_M$ is placed on a circle $\bGamma_R$ with radius $R>0$. The error is quantified by the residual function defined as
\begin{eqnarray}
    \mathbf{Res}(\epsilon,N): = \left(\sum_{j = 1}^M|u(\mathbf{x}_j,\bGamma_{\epsilon,\mathbf{v}}) -  \mathbf{Taylor}(\mathbf{x}_j;u,\mathbf{v},N)|\right)^{\frac{1}{N+1}}.
\end{eqnarray}
The residual is determined by the perturbation magnitude $\epsilon$ and the order of Taylor expansion $N$. Theoretically, according to equation~\eqref{shapeTaylorMultiV}, $\mathbf{Res}(\epsilon,N)$ is of order $\mathcal{O}(\epsilon)$.

\subsection{Sound-hard boundary}\label{Numerical11}

\begin{figure}[htbp]
    \centering
    \begin{subfigure}[b]{0.4\textwidth}
        \centering
        \includegraphics[width=\textwidth]{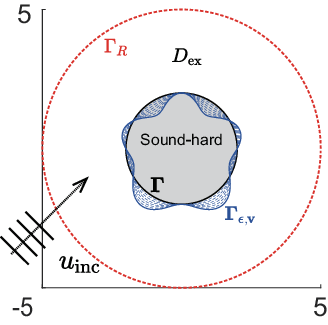}
        \caption{Geometry of the scattering problem with a perturbed boundary.}
        \label{shard1}
    \end{subfigure}


     \begin{subfigure}[b]{0.47\textwidth}
        \centering
        \includegraphics[width=\textwidth]{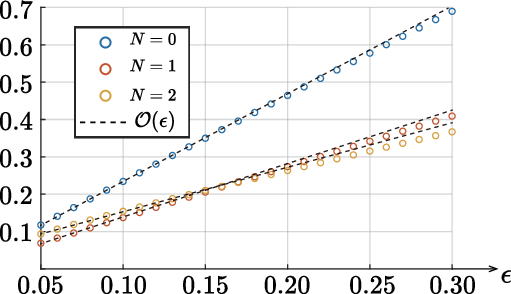}
        \caption{The residual of shape Taylor expansion of different orders at $k = 3$.}
        \label{shard2}
     \end{subfigure}
     \hfill
     \begin{subfigure}[b]{0.47\textwidth}
         \centering
        \includegraphics[width=\textwidth]{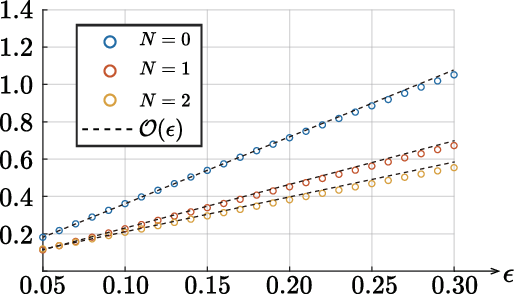}
        \caption{The residual of shape Taylor expansion of different orders at $k = 5$.}
        \label{shard3}
     \end{subfigure}
     \caption{Results for the scattering of a perturbed circle with sound-hard boundary condition.}
     \label{shard}
\end{figure}

 Let $\bGamma$ be a circle with radius $r$, the parametrization of $\bGamma$ using the arc length $s$ is given by
\begin{eqnarray}
    \bgamma(s) = r\left[\cos\frac{s}{r},\sin\frac{s}{r}\right]^\top,\qquad s\in[0,2\pi r].
\end{eqnarray}
The velocity field restricted to $\bGamma$ for perturbation is given by $\mathbf{v}(s) = v(s)\mathbf{n}(s)$ with
\begin{eqnarray}
    v(s) = 0.4\sin\frac{2s}{r}\cos\frac{3s}{r}.
\end{eqnarray}
The incident field $\phi$ is generated by a plane wave~\eqref{incwave1} with propagating direction $\mathbf{z} = [1,1]^\top/\sqrt{2}$. We test the cases for different wave numbers. Let the radii of $\bGamma$ and $\bGamma_R$ be $r = 2$ and $R = 5$, respectively. The perturbation magnitude $\epsilon$ is constrained by $\epsilon\le 0.3$. The highest order of Taylor expansion is tested at $N=2$, which implies that, at most, the third order normal derivatives of the total field must be computed. 

The result of the boundary perturbation is shown in Figure~\ref{shard1}, where $\bGamma_{\epsilon,\mathbf{v}}$ gradually transforms into a general star-shaped geometry as $\epsilon$ increases. The comparisons between the perturbation magnitude $\epsilon$ and the residuals $\mathbf{Res}(\cdot, N)$ for $N = 0, 1, 2$ are shown in Figures~\ref{shard2} and \ref{shard3}, corresponding to wavenumbers $k = 3$ and $k = 5$, respectively. The plots of $\epsilon$ versus $\mathbf{Res}(\cdot, N)$ show an approximately linear relation between them, especially when the perturbation magnitude is small. This relationship demonstrates that the error between the $N$th order shape Taylor expansion of $u(\cdot,\bGamma)$ and the perturbed field $u(\cdot,\bGamma_{\epsilon,\mathbf{v}})$ is equal to $\mathcal{O}(\epsilon^{N+1})$. Furthermore, comparing the results for $k = 3$ and $k = 5$ suggests that the approximation error increases with the frequency of the incident field. To further verify this, we choose an observation point $\mathbf{x}_{\rm obs} = [0,4]^\top$ and present the relative errors of the second order shape Taylor expansion for different $k$ and $\epsilon$ in Table~\ref{tab_soundhard}.
\begin{table}[ht]
    \centering
    \begin{tabular}{|c|c|c|c|c|}
    \hline
     $N=2$    & $k = 3$ & $k=\pi$ & $k = 5$ & $k=2\pi$ \\
    \hline
     $\epsilon = 0.25$    & 5.5341e-03 &6.2543e-03&1.0349e-02&1.0393e-02\\
     $\epsilon = 0.20$    &3.5169e-03&3.9432e-03&6.3956e-03&9.1226e-03\\
     $\epsilon = 0.15$   &1.9648e-03&2.1877e-03&3.4899e-03&6.5305e-03\\
     $\epsilon = 0.10$    &8.6731e-04&9.5972e-04&1.5142e-03&3.5201e-03\\
    \hline
    \end{tabular}
    \vspace{3mm}
    \caption{Relative errors of the second order shape Taylor expansion for the sound-hard scattering problem with different boundary perturbations and wavenumbers.}
    \label{tab_soundhard}
\end{table}

\subsection{Sound-soft boundary}

\begin{figure}[htbp]
    \centering
    \begin{subfigure}[b]{0.4\textwidth}
        \centering
        \includegraphics[width=\textwidth]{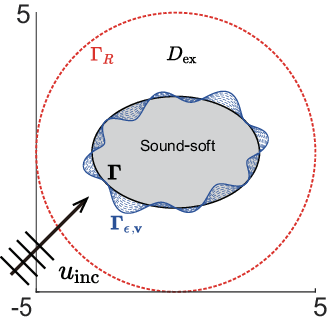}
        \caption{Geometry of the scattering problem with a perturbed boundary.}
        \label{Ssoft1}
    \end{subfigure}

     \begin{subfigure}[b]{0.47\textwidth}
        \centering
        \includegraphics[width=\textwidth]{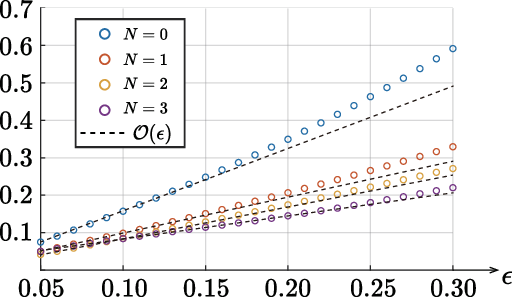}
        \caption{The residual of shape Taylor expansion of different orders at $k = 3$.}
        \label{Ssoft2}
     \end{subfigure}
     \hfill
     \begin{subfigure}[b]{0.47\textwidth}
         \centering
        \includegraphics[width=\textwidth]{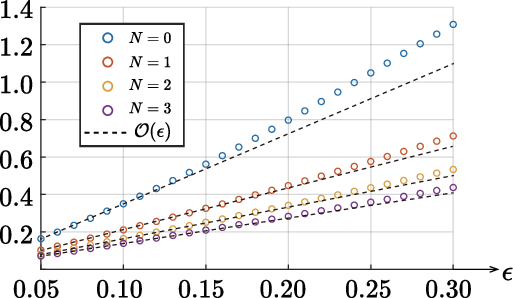}
        \caption{The residual of shape Taylor expansion of different orders at $k = 5$.}
        \label{Ssoft3}
     \end{subfigure}
     \caption{Results for the scattering of a perturbed ellipse with sound-soft boundary condition.}
     \label{Ssoft}
    
\end{figure}

 Let $\bGamma$ be an ellipse parameterized by
\begin{eqnarray}
    \bgamma(\theta) = \left[a\cos\theta,b\sin\theta\right]^\top,\qquad\theta\in[0,2\pi].
\end{eqnarray}
The velocity field generating the perturbed boundary $\bGamma_{\epsilon,\mathbf{v}}$ is given by
\begin{eqnarray}
   \mathbf{v}(\theta) =  v(\theta)\mathbf{n}(\theta) = 0.4\sin(5\theta)\cos(3\theta)\mathbf{n}(\theta).
\end{eqnarray}
Note that the boundary parameter $\theta$ is not the arc length parameter $s$, so the tangential differential operator should be replaced by $\partial_s = |\partial_\theta\bgamma|^{-1}\partial_\theta$. To compare the error between the shape Taylor expansion of $u(\cdot,\bGamma)$ and the perturbed field $u(\cdot,\bGamma_{\epsilon,\mathbf{v}})$, we choose the observation points $\mathbf{x}_1,\dots,\mathbf{x}_M$ as described in Subsection~\ref{Numerical11}. We still set the perturbation magnitude $\epsilon\le0.3$, and the propagating direction of the incident wave as $\mathbf{z} = [1,1]^\top/\sqrt{2}$. The semi-axes of the ellipse boundary $\bGamma$ are given by $a = 3$ and $b = 2$. The highest order of the Taylor expansion is tested at $N = 3$. According to equation \eqref{vecBDN1}, this requires computing up to the third order normal derivative of the scattered field on $\bGamma$.

The result of the sound-soft boundary perturbation is shown in Figure~\ref{Ssoft1}, where the elliptical boundary $\bGamma$ gradually evolves into a complex shape $\bGamma_{\epsilon,\mathbf{v}}$ as the magnitude $\epsilon$ increases. Figures~\ref{Ssoft2} and~\ref{Ssoft3} give the plot of the residual $\mathbf{Res}(\cdot, N)$ and the magnitude $\epsilon$ for wavenumbers $k = 3$ and $k=5$, respectively. Similar to the sound-hard case, the residual is slightly larger when $k=5$. When $\epsilon$ is small, $\mathbf{Res}(\cdot,N)$ holds an approximately linear relationship with $\epsilon$. As $\epsilon$ increases, the approximation error of the first order expansion grows significantly, while the higher order expansions, especially for $N = 3$, still provide reliable approximations. Table~\ref{tab_soundsoft} gives the relative errors of the third order shape Taylor expansion at $\mathbf{x}_{\rm obs} = [0,4]^\top$ for different wavenumbers.
\begin{table}[ht]
    \centering
    \begin{tabular}{|c|c|c|c|c|}
    \hline
     $N=3$    & $k = 3$ & $k=\pi$ & $k = 5$ & $k=2\pi$ \\
    \hline
     $\epsilon = 0.25$    &1.0663e-04&1.1065e-04&1.8471e-04&2.2798e-04\\
     $\epsilon = 0.20$    &5.1302e-05&5.3292e-05&8.9754e-05&1.1245e-04\\
     $\epsilon = 0.15$   &2.0257e-05&2.1069e-05&3.5878e-05&4.5713e-05\\
     $\epsilon = 0.10$    &5.5933e-06&5.8263e-06&1.0057e-05&1.3059e-05\\
    \hline
    \end{tabular}
    \vspace{3mm}
    \caption{Relative errors of the third order shape Taylor expansion for the sound-soft scattering problem with different boundary perturbations and wavenumbers.}
    \label{tab_soundsoft}
\end{table}

\subsection{Penetrable scattering}
\begin{figure}[htbp]
    \centering
    \begin{subfigure}[b]{0.55\textwidth}
        \centering
        \includegraphics[width=\textwidth]{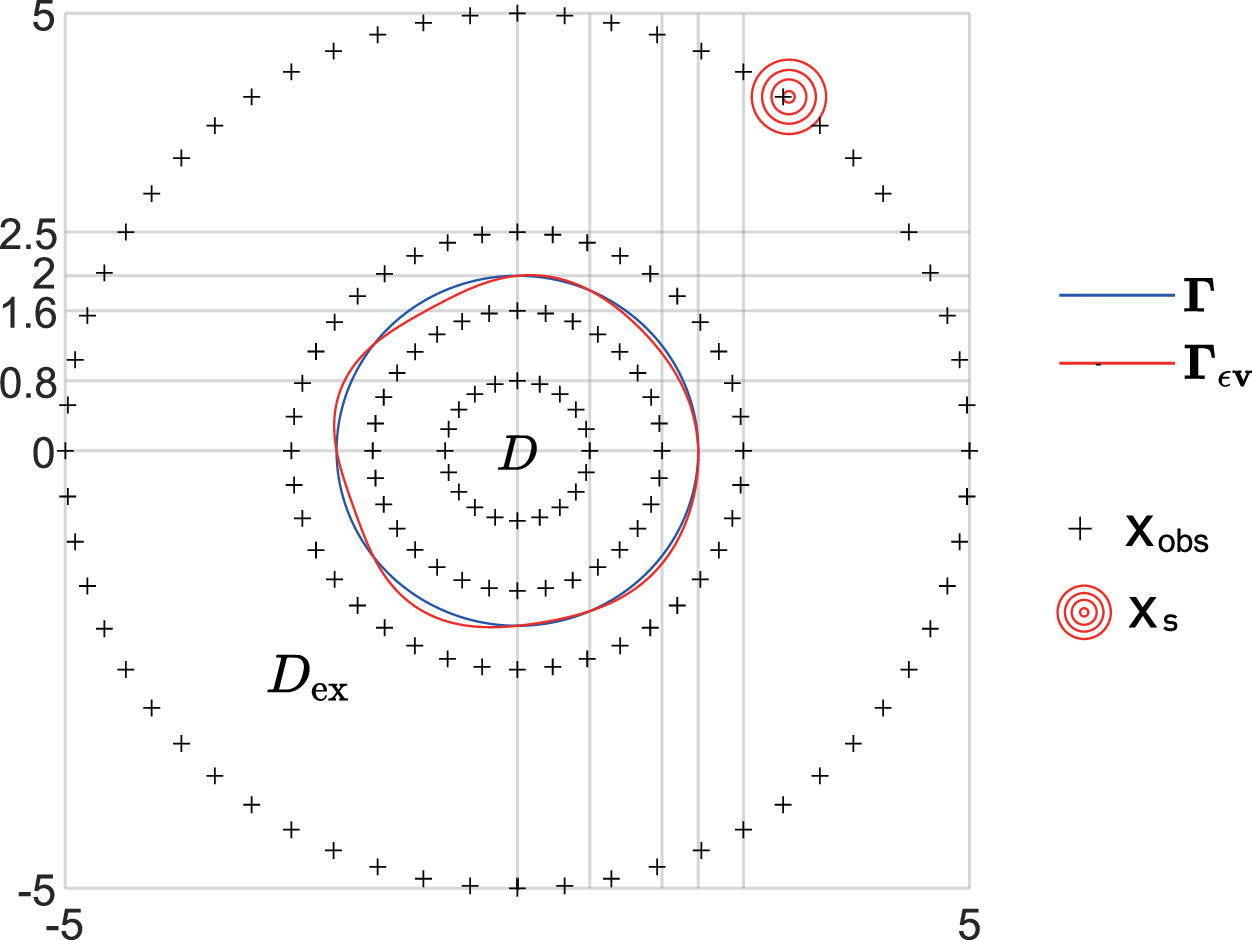}
        \caption{Geometry of the boundary perturbation, source point and observation points.}
        \label{Tran1}
    \end{subfigure}
    \hfill
    \begin{subfigure}[b]{0.4\textwidth}
        \centering
        \includegraphics[width=\textwidth]{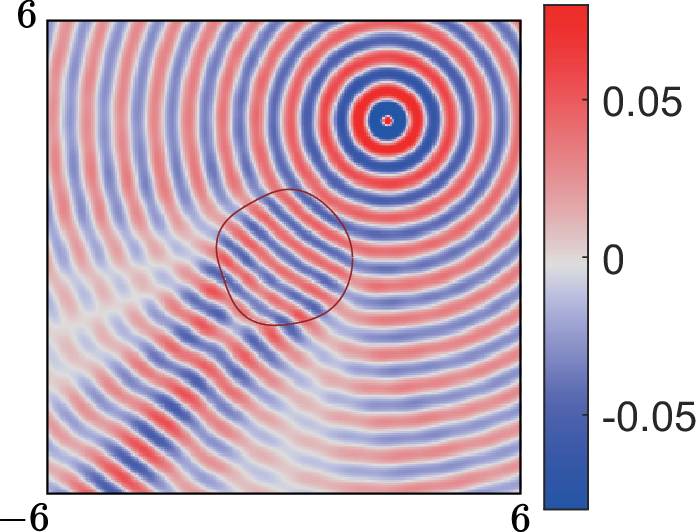}
        \caption{The total field of a perturbed circle.}
        \label{Tran2a}
    \end{subfigure}

     \begin{subfigure}[b]{0.32\textwidth}
         \centering
        \includegraphics[width=\textwidth]{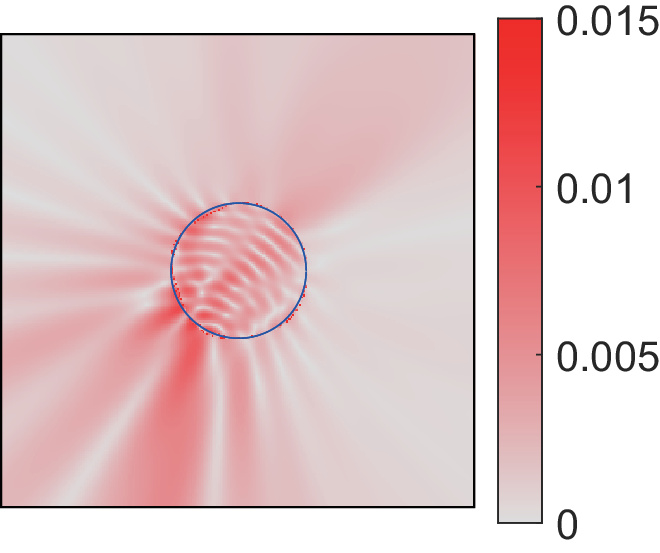}
        \caption{Approximation error of the shape Taylor expansion of order $N=0$.}
        \label{Tran2b}
     \end{subfigure}
     \hfill
     \begin{subfigure}[b]{0.32\textwidth}
         \centering
        \includegraphics[width=\textwidth]{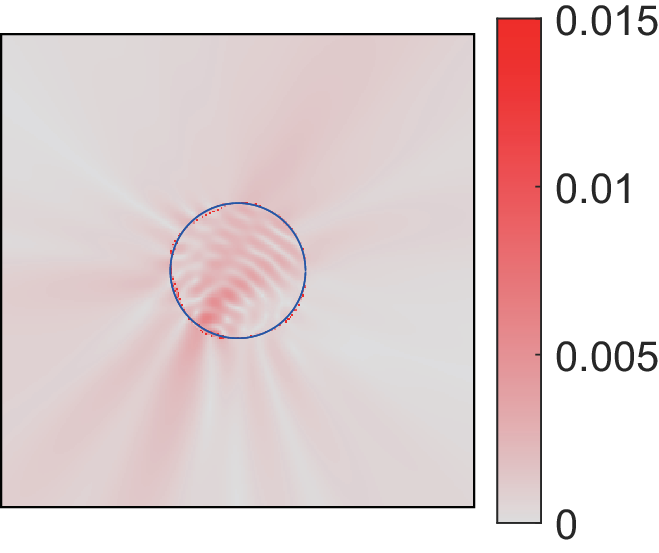}
        \caption{Approximation error of the shape Taylor expansion of order $N=1$.}
        \label{Tran2c}
     \end{subfigure}
     \hfill
     \begin{subfigure}[b]{0.32\textwidth}
         \centering
        \includegraphics[width=\textwidth]{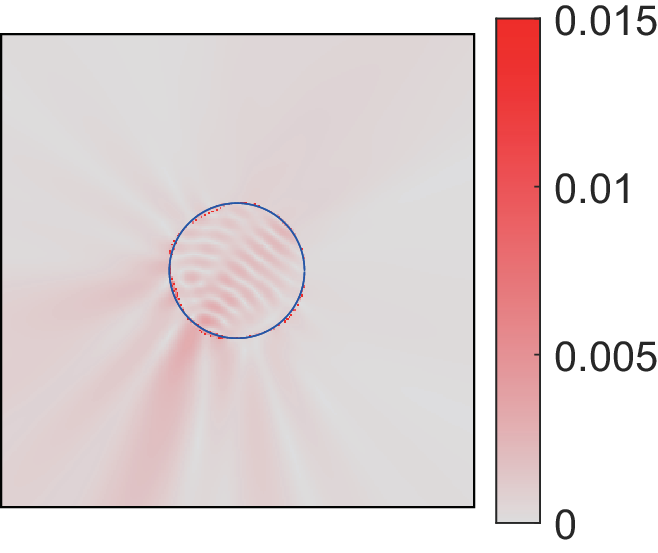}
        \caption{Approximation error of the shape Taylor expansion of order $N=2$.}
        \label{Tran2d}
     \end{subfigure}

     \caption{Illustration for the scattering of a perturbed circle with transmission boundary condition.}
     \label{Tran12}
\end{figure}

\begin{figure}[htbp]
    \centering
        \begin{subfigure}[b]{0.47\textwidth}
         \centering
        \includegraphics[width=\textwidth]{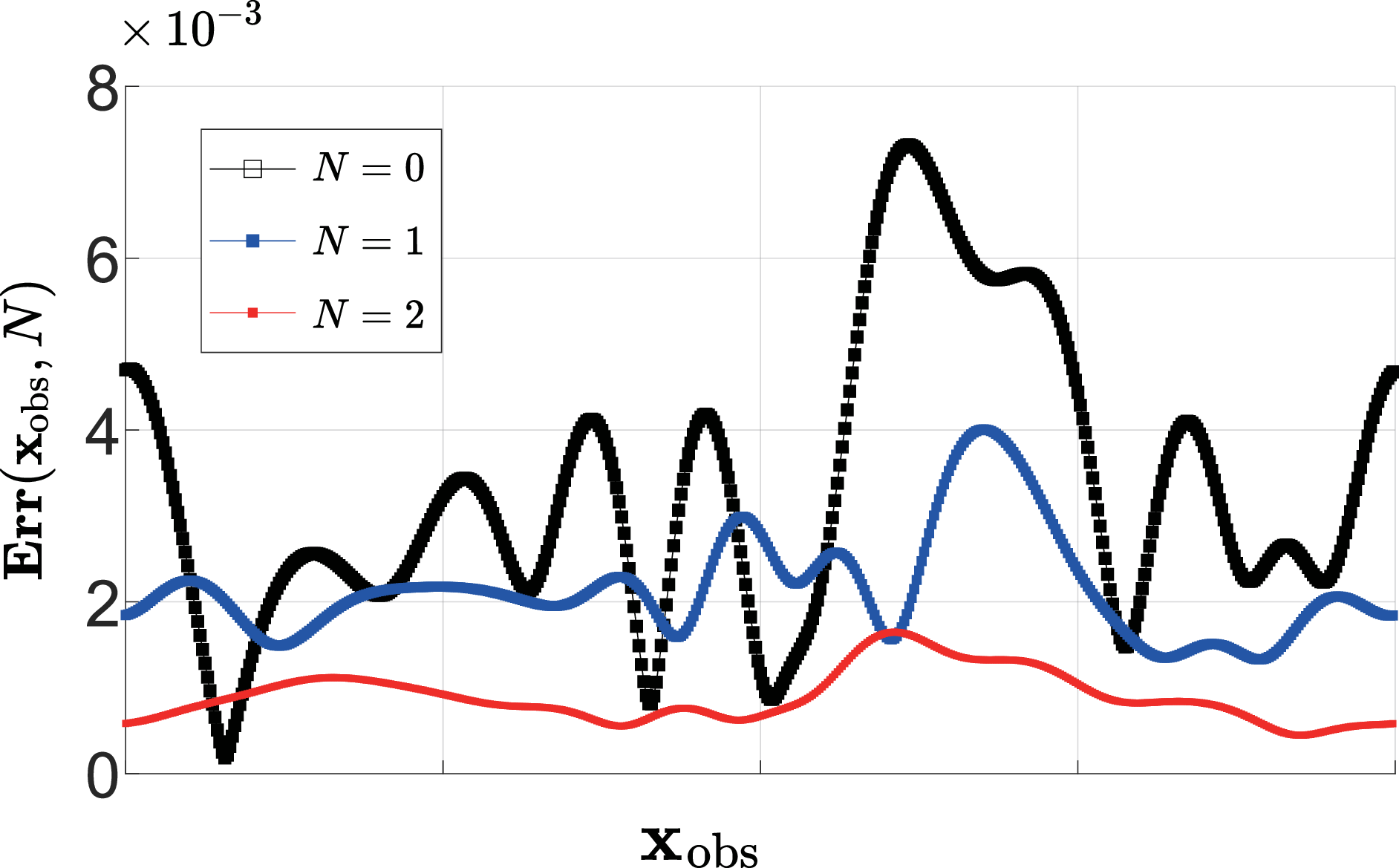}
        \caption{Error at $r = 0.8$.}
        \label{Tran3d}
     \end{subfigure}
\hfill
     \begin{subfigure}[b]{0.47\textwidth}
         \centering
        \includegraphics[width=\textwidth]{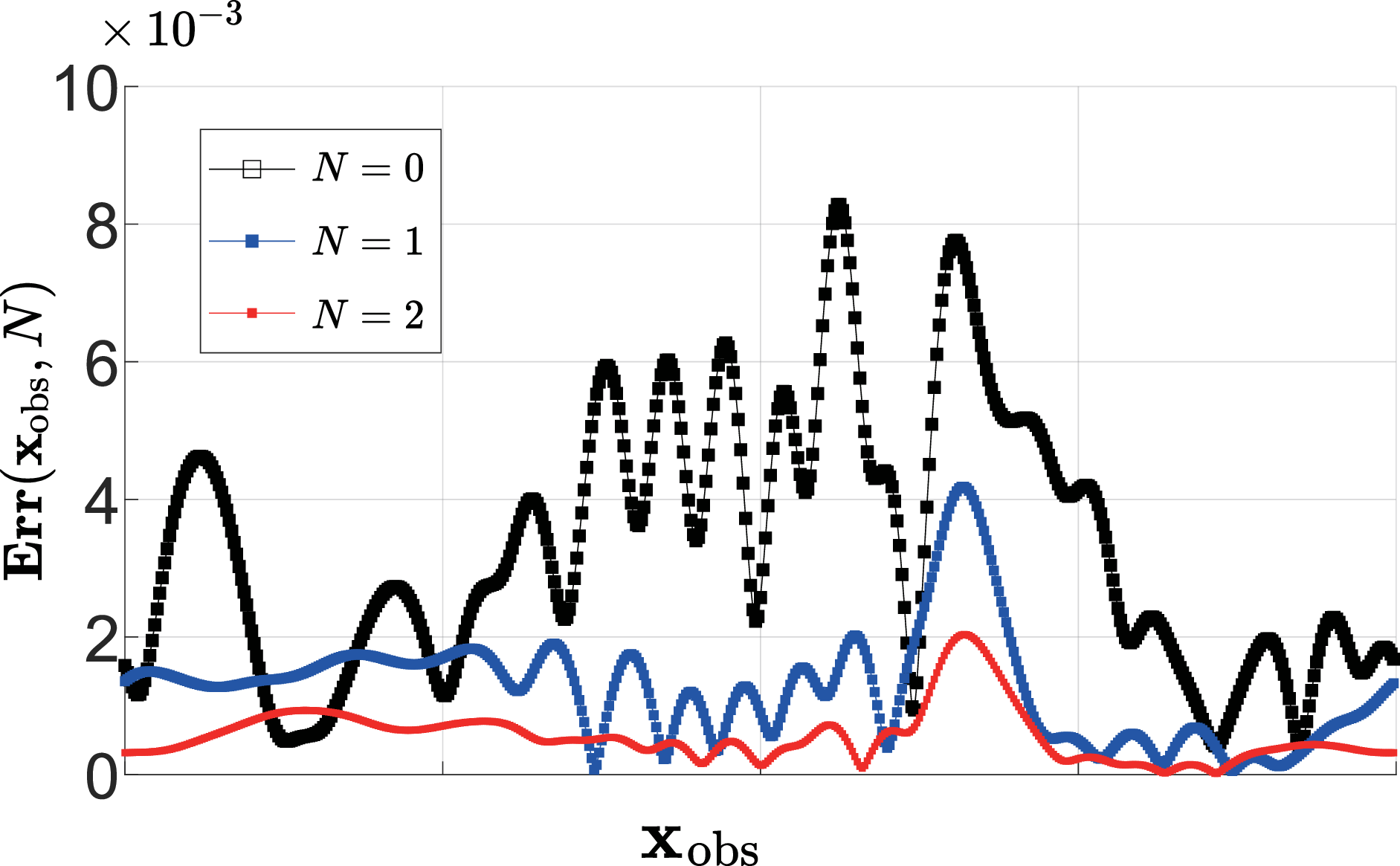}
        \caption{Error at $r = 1.6$.}
        \label{Tran3c}
     \end{subfigure}
       \vspace{0.4cm}
       
    \begin{subfigure}[b]{0.47\textwidth}
        \centering
        \includegraphics[width=\textwidth]{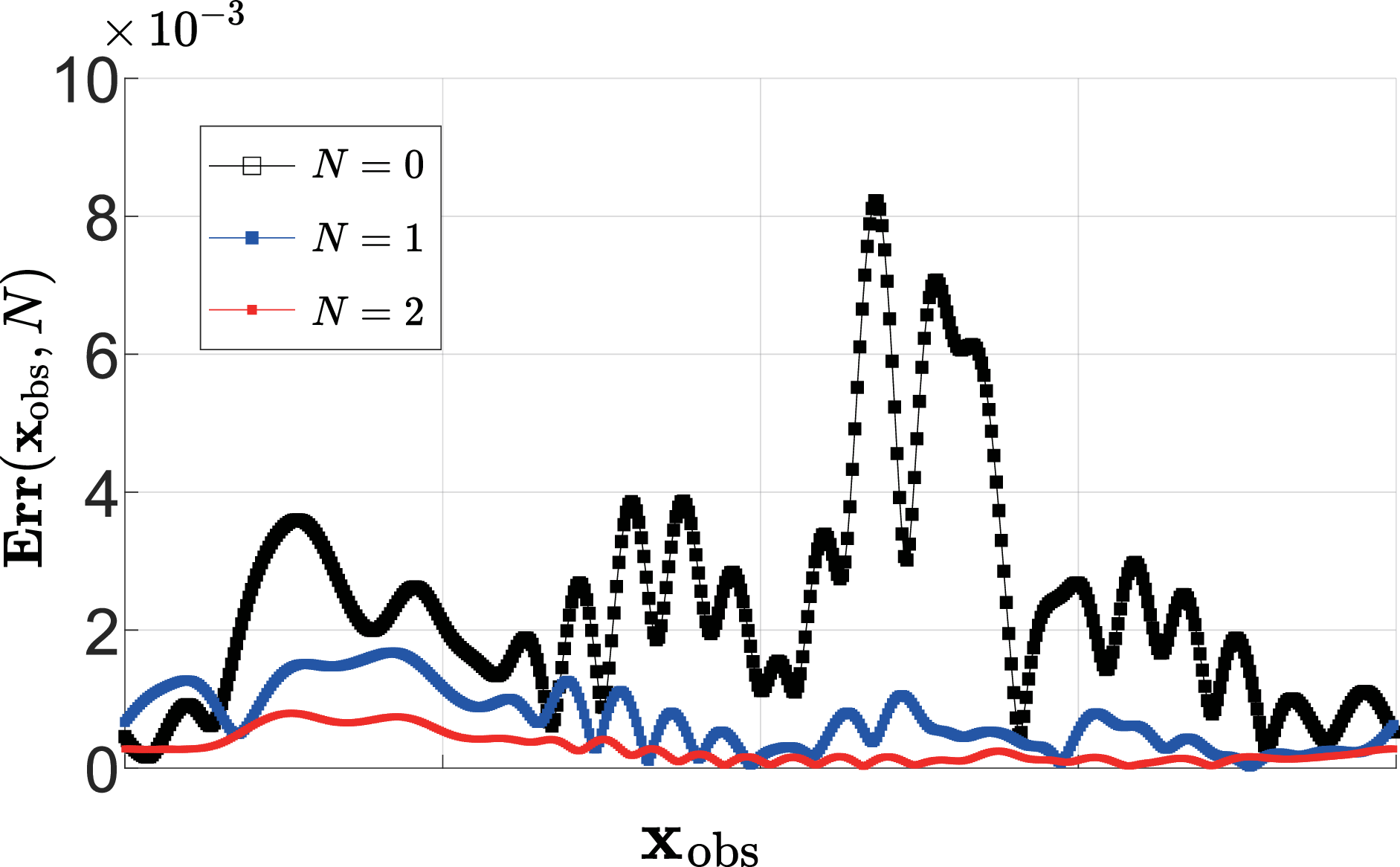}
        \caption{Error at $r = 2.5$.}
        \label{Tran3a}
    \end{subfigure}
    \hfill
    \begin{subfigure}[b]{0.47\textwidth}
        \centering
        \includegraphics[width=\textwidth]{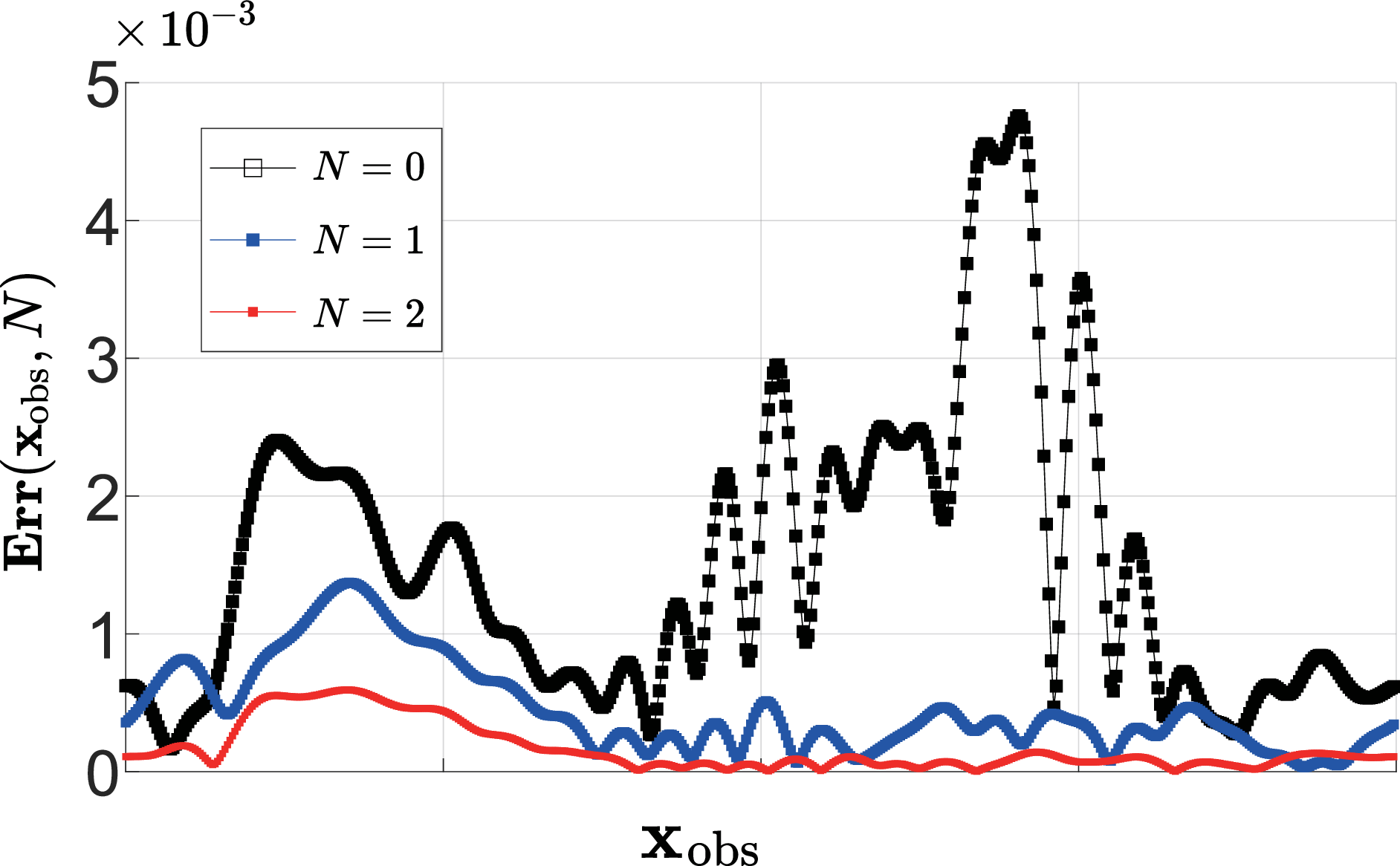}
        \caption{Error at $r = 5$.}
        \label{Tran3b}
    \end{subfigure}

     \caption{Approximation errors of the shape Taylor expansion for the transmission scattering problem at observation points with different radii.}
     \label{Tran3}
\end{figure}

In this example, the total field of this scattering problem is defined as
\begin{eqnarray}
    u = \left\{\begin{aligned}
        &u_{\rm in},&&\mathbf{x}\in D,\\
        &u_{\rm ex} + \phi,&&\mathbf{x}\in D_{\rm ex}.
    \end{aligned}\right.
\end{eqnarray}
 We choose the boundary $\bGamma$ as a circle with radius $r = 2$. Here, we denote the scattered field inside and outside the scatterer as $u_{\rm in}$ and $u_{\rm ex}$, respectively. Recalling equation~\eqref{scattering_eqn1}, the transmission conditions across the medium boundary $\bGamma$ for $u$ are given by
\begin{eqnarray}
\left\{
    \begin{aligned}
        &\lim_{\mathbf{x}\rightarrow\bGamma^{+}}u_{\rm ex} - \lim_{\mathbf{x}\rightarrow\bGamma^{-}}u_{\rm in} = \phi,\\
        &\lim_{\mathbf{x}\rightarrow\bGamma^{+}}\alpha_{\rm ex}\partial_{\mathbf{n}}u_{\rm ex} - \lim_{\mathbf{x}\rightarrow\bGamma^{-}}\alpha_{\rm in}\partial_{\mathbf{n}}u_{\rm in} = \alpha_{\rm ex}\partial_{\mathbf{n}}\phi,
    \end{aligned}\right.
\end{eqnarray}
where the medium parameters are set as $\alpha_{\rm in} = 0.7$ and $\alpha_{\rm ex} = 1$. The velocity field $\mathbf{v}$ is given by
\begin{eqnarray}
    \mathbf{v}(s) = v(s)\mathbf{n}(s) = 0.25\left(\sin\frac{2s}{r}\cos\frac{3s}{r} - 0.7\sin\frac{4s}{r}\right)\mathbf{n}(s),
\end{eqnarray}
where $s$ is the arc length parameter. A point source located at $\mathbf{x}_{\rm s} = [3,4]^\top$ with $k = 2\pi$ generates the incident field defined by equations~\eqref{incwave1} and~\eqref{incwave2}. To compare the errors between the Taylor expansion of $u(\cdot,\bGamma)$ and the perturbed field $u(\cdot,\bGamma_{\epsilon,\mathbf{v}})$ over the whole domain, we place the observation points $\mathbf{x}_{\rm obs}$ in both $D$ and $D_{\rm ex}$. Especially, the observation points are distributed on four concentric circles with radii of $0.8$, $1.6$, $2.5$, and $5$, respectively. Figures~\ref{Tran1} and \ref{Tran2a} show the setup of the scattering problem and the total field of a perturbed circle, respectively.

We let the perturbation magnitude $\epsilon = 0.3$ and compute the shape Taylor expansion for $N = 1$ and $2$. The error between the expansion and the perturbed field in the domain $( D\cap D^{\rm per}_{\rm in})\cup( D_{\rm ex}\cap D^{\rm per}_{\rm ex})$ is shown in Figure~\ref{Tran2b}--\ref{Tran2d}. Figure~\ref{Tran3} shows the absolute errors at the observation points. It can be seen that the second order shape Taylor expansion can reliably approximate $u(\cdot,\bGamma_{\epsilon,\mathbf{v}})$ even with a relatively large perturbation, and the approximation performs consistently well inside and outside the scatterer. To further verify this, we also list the relative errors of the second order shape Taylor expansion at $\mathbf{x}_{\rm obs} = [1,0]^\top\in D$ and $\mathbf{x}_{\rm obs} = [3,3]^\top\in D_{\rm ex}$ in Tables~\ref{tab_tranin} and~\ref{tab_tranex}, respectively.

\begin{table}[ht]
    \centering
    \begin{tabular}{|c|c|c|c|c|}
    \hline
     $N=2$    & $k = 3$ & $k=\pi$ & $k = 5$ & $k=2\pi$ \\
    \hline
     $\epsilon = 0.25$    &7.5869e-04&7.7379e-04&3.5713e-03&6.0900e-03\\
     $\epsilon = 0.20$    &4.0084e-04&4.0752e-04&1.7875e-03&3.1783e-03\\
     $\epsilon = 0.15$   &1.7856e-04&1.8122e-04&7.2236e-04&1.3633e-03\\
     $\epsilon = 0.10$    &5.8924e-05&5.9925e-05&1.9412e-04&4.0488e-04\\
    \hline
    \end{tabular}
    \vspace{3mm}
    \caption{Relative error of the second order shape Taylor expansion at the observation point inside the penetrable scatterer.}
    \label{tab_tranin}
\end{table}

\begin{table}[ht]
    \centering
    \begin{tabular}{|c|c|c|c|c|}
    \hline
      $N=2$   & $k = 3$ & $k=\pi$ & $k = 5$ & $k=2\pi$ \\
    \hline
     $\epsilon = 0.25$    &1.1782e-02&1.0999e-02&6.1792e-02&1.1954e-01\\
     $\epsilon = 0.20$    &5.9968e-03&5.5827e-03&3.1639e-02&6.1621e-02\\
     $\epsilon = 0.15$   &2.5212e-03&2.3344e-03&1.3332e-02&2.6114e-02\\
     $\epsilon = 0.10$    &7.4958e-04&6.8593e-04&3.9455e-03&7.7662e-03\\
    \hline
    \end{tabular}
    \vspace{3mm}
    \caption{Relative error of the second order shape Taylor expansion at the observation point outside the penetrable scatterer.}
    \label{tab_tranex}
\end{table}

\subsection{Moment estimations of a random field}\label{Randomproblem2}

\begin{figure}
    \centering
    \includegraphics[width=0.5\linewidth]{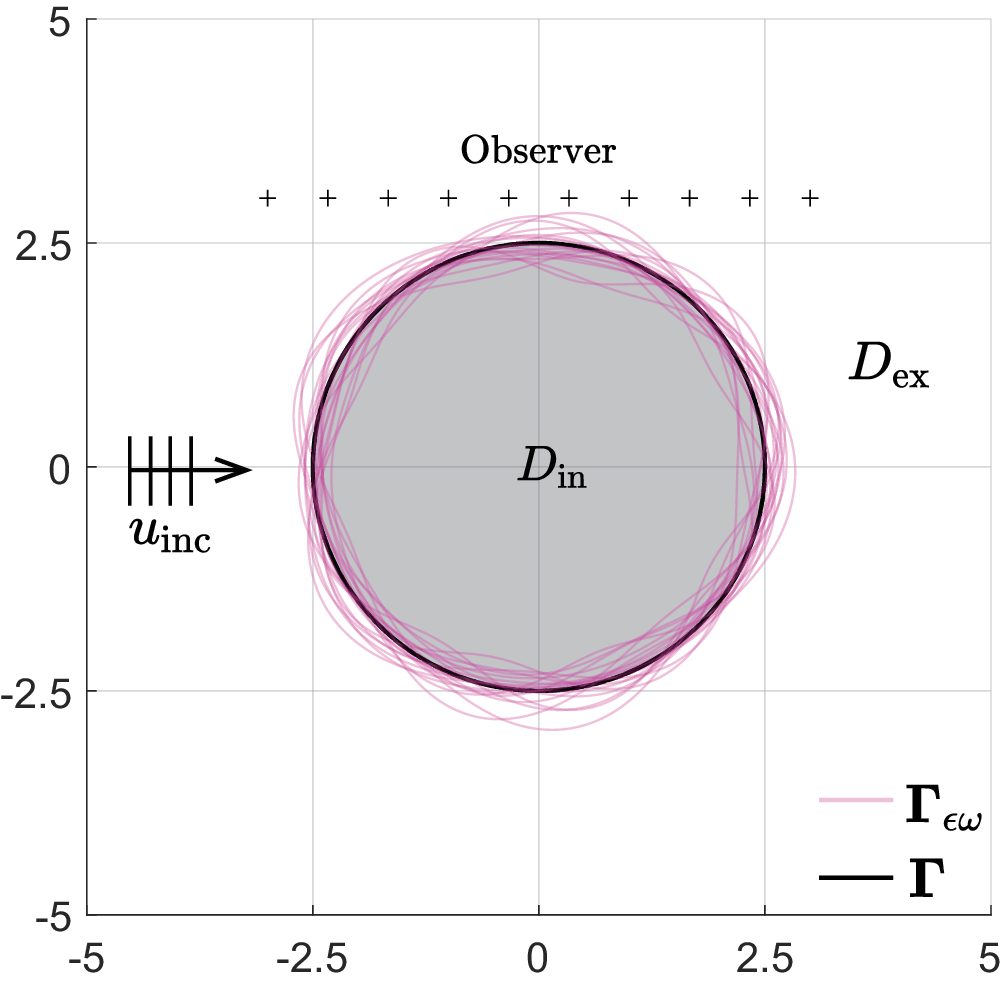}
    \caption{Scattering  with a random boundary perturbation under the impedance boundary condition}
    \label{Rand1}
\end{figure}

We consider the random scattering problems with the impedance boundary condition. The randomly perturbed boundary $\bGamma_{\epsilon\bomega}$ is given by equation~\eqref{randbound}, where the reference boundary $\bGamma$ is a circle with radius $r = 2.5$. We assume the perturbation in equation~\eqref{randbound} is composed of 11 velocity fields, which are given by $\mathbf{v}_j(\theta) = v_j(\theta)\mathbf{n}(\theta)$ with
\begin{eqnarray}
\left\{
\begin{aligned}
    &v_1(\theta) = 1,\\
    &v_j(\theta) = \cos(j-1)\theta\quad{\rm for}\quad j = 2,...,6,\\
    &v_j(\theta) = \sin(j-6)\theta\quad{\rm for}\quad j = 7,...,11.
\end{aligned}\right.
\end{eqnarray}
 The perturbation magnitude is set as $\epsilon\le 0.03$. Ten observation points are placed in the near field region outside the scatterer. The geometry of the random boundary perturbation is given in Figure~\ref{Rand1}.

To investigate the estimation error, we generate the reference data of $\mathbb{M}^n[u]$ by performing Monte Carlo simulations 3000 times as detailed in~\cite{harbrecht2013first}. According to Section~\ref{Randomproblem}, the moment estimations $\mathcal{E}^n_N$ based on the $N$th order shape Taylor expansion for $N = 0,1,2$ are given by
\begin{eqnarray}\label{est012}
    \begin{aligned}
        \mathcal{E}^n_0(\mathbf{x}):=& u^n(\mathbf{x}),\\
        \mathcal{E}^n_1(\mathbf{x}): =& u^n(\mathbf{x}) + \frac{\epsilon^2}{3}\sum_{i = 1}^{11}\binom{n}{2}u^{n-2}(\mathbf{x})\delta_{\mathbf{v}_i}u^2(\mathbf{x}),\\
        \mathcal{E}^n_2(\mathbf{x}): =&u^n(\mathbf{x}) + \frac{\epsilon^2}{3}\sum_{i = 1}^{11}\left(\binom{n}{2}u^{n-2}(\mathbf{x})\delta_{\mathbf{v}_i}u^2(\mathbf{x}) + \binom{n}{1}u^{n-1}(\mathbf{x})\frac{1}{2}\delta_{[\mathbf{v}_i,\mathbf{v}_i]}u(\mathbf{x})\right).
    \end{aligned}
\end{eqnarray}
In the uncertainty quantification of wave scattering problems without using higher order shape expansions~\cite{hao2018computation}, $\mathcal{E}^1_0 = \mathcal{E}^1_1 = u(\cdot, \bGamma)$ always represents the expectation of the scattered field. However, based on the higher order shape Taylor expansion, one can see that the field $u(\cdot, \bGamma)$ from an obstacle with the expected shape $\bGamma$ is generally not equal to the expectation of the field $\mathbb{E}[u]$ (see Figure~\ref{Rand3} for the case of $n=1$). To compare the error between the estimation in equation~\eqref{est012} and the reference $\mathbb{M}^n[u]$, let us consider the absolute residual:
\begin{eqnarray}
    \mathbf{Res}(\mathcal{E}^n) = \sum_{\mathbf{x}_{\rm obs}}|\mathbb{M}^n[u](\mathbf{x}_{\rm obs}) - \mathcal{E}^n(\mathbf{x}_{\rm obs})|,
\end{eqnarray}
where $\mathbf{x}_{\rm obs}$ are a set of observation points in the exterior region $D_{\rm ex}$, as shown in Figure~\ref{Rand1}.

\begin{figure}[htbp]
    \centering
    \begin{subfigure}[b]{0.32\linewidth}
    \centering
    \includegraphics[width=\textwidth]{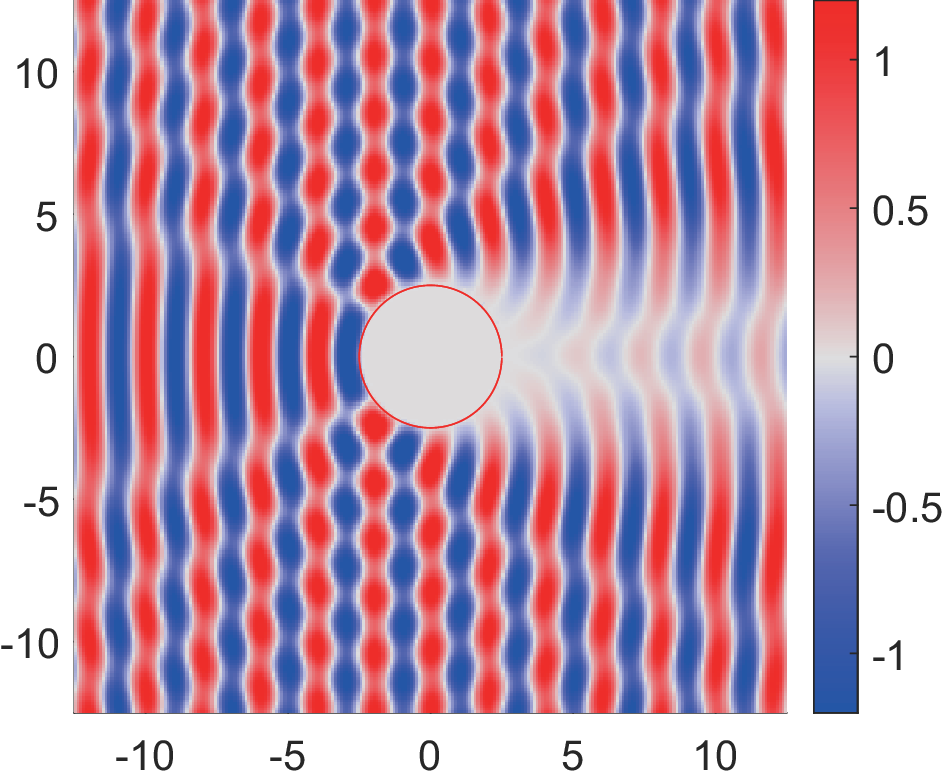}
    \caption{$\mathbf{Re}\left(\mathbb{M}^1[u]\right)$}
    \label{Rand2a}
    \end{subfigure}
    \hfill
    \begin{subfigure}[b]{0.32\linewidth}
    \centering
    \includegraphics[width=\textwidth]{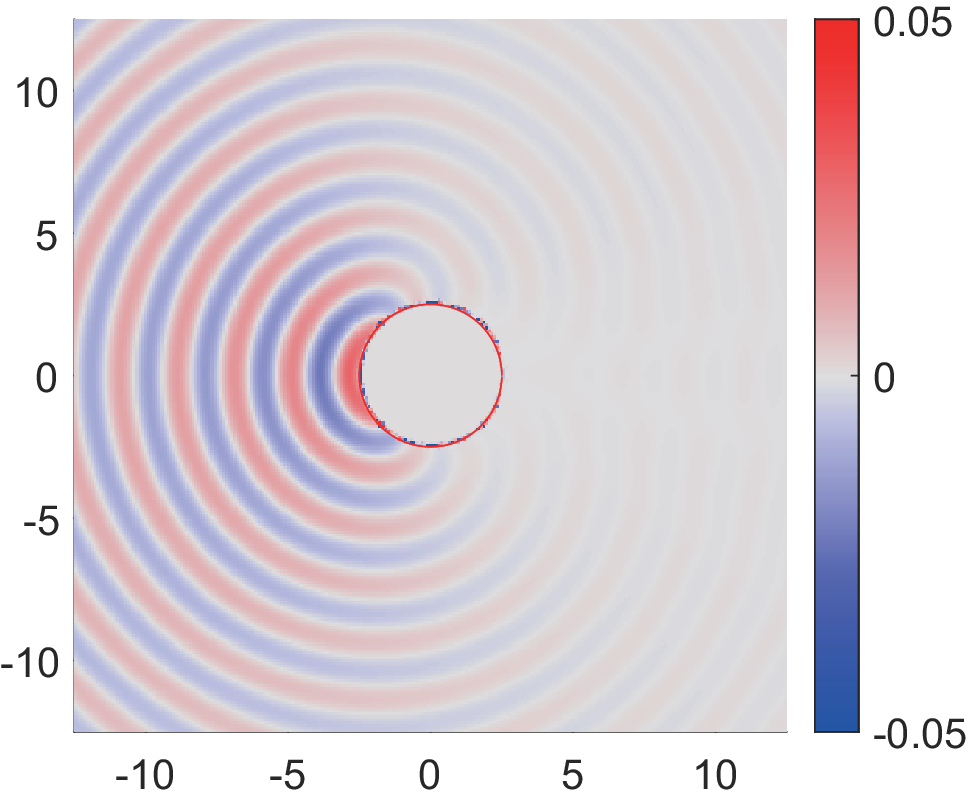}
    \caption{$\mathbf{Re}\left(\mathbb{M}^1[u]- \mathcal{E}^1_1\right)$}
    \label{Rand2b}
    \end{subfigure}
    \hfill
    \begin{subfigure}[b]{0.32\linewidth}
    \centering
    \includegraphics[width=\textwidth]{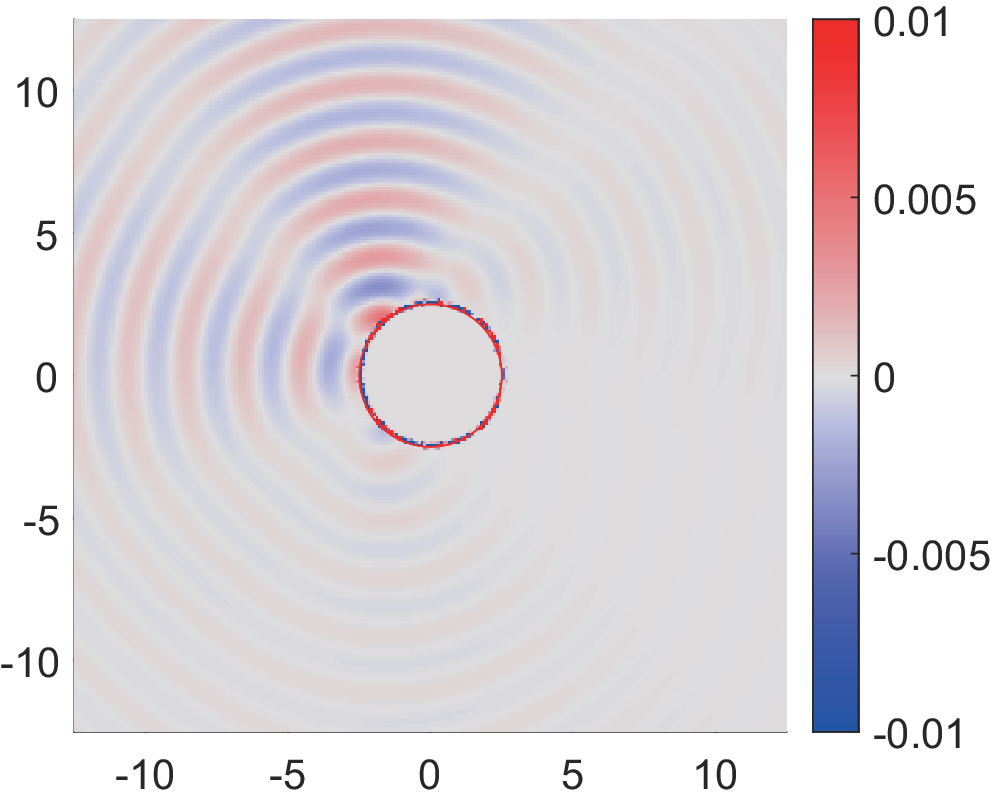}
    \caption{$\mathbf{Re}\left(\mathbb{M}^1[u]- \mathcal{E}^1_2\right)$}
    \label{Rand2c}
    \end{subfigure}

\begin{subfigure}[b]{0.32\linewidth}
    \centering
    \includegraphics[width=\textwidth]{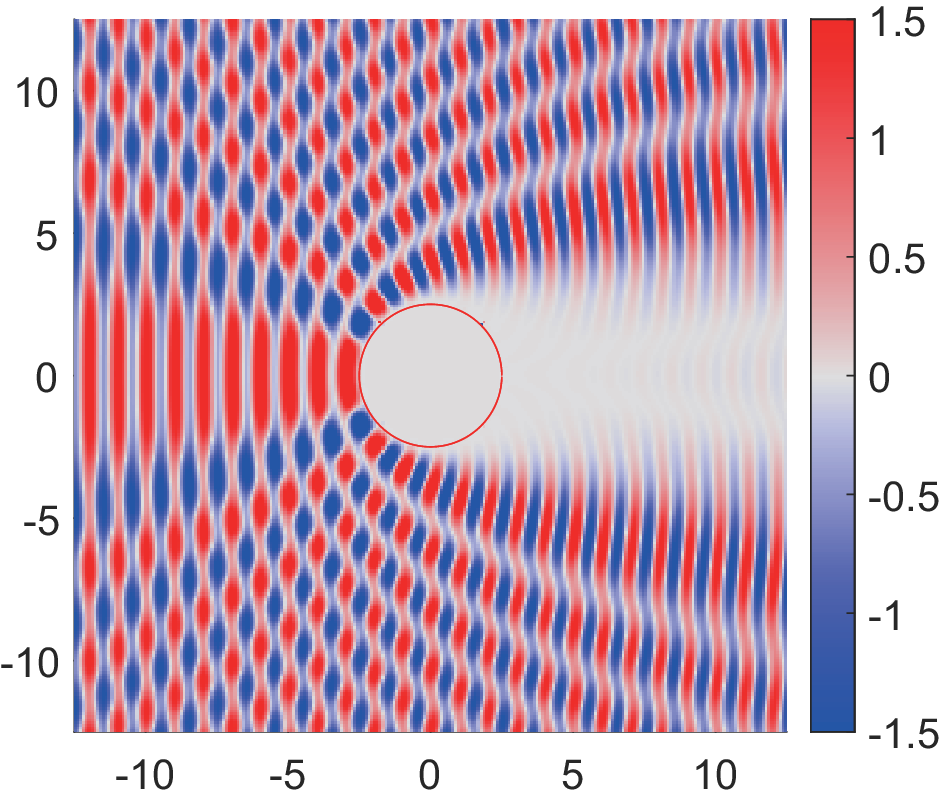}
    \caption{$\mathbf{Re}\left(\mathbb{M}^2[u]\right)$}
    \label{Rand2d}
    \end{subfigure}
    \hfill
    \begin{subfigure}[b]{0.32\linewidth}
    \centering
    \includegraphics[width=\textwidth]{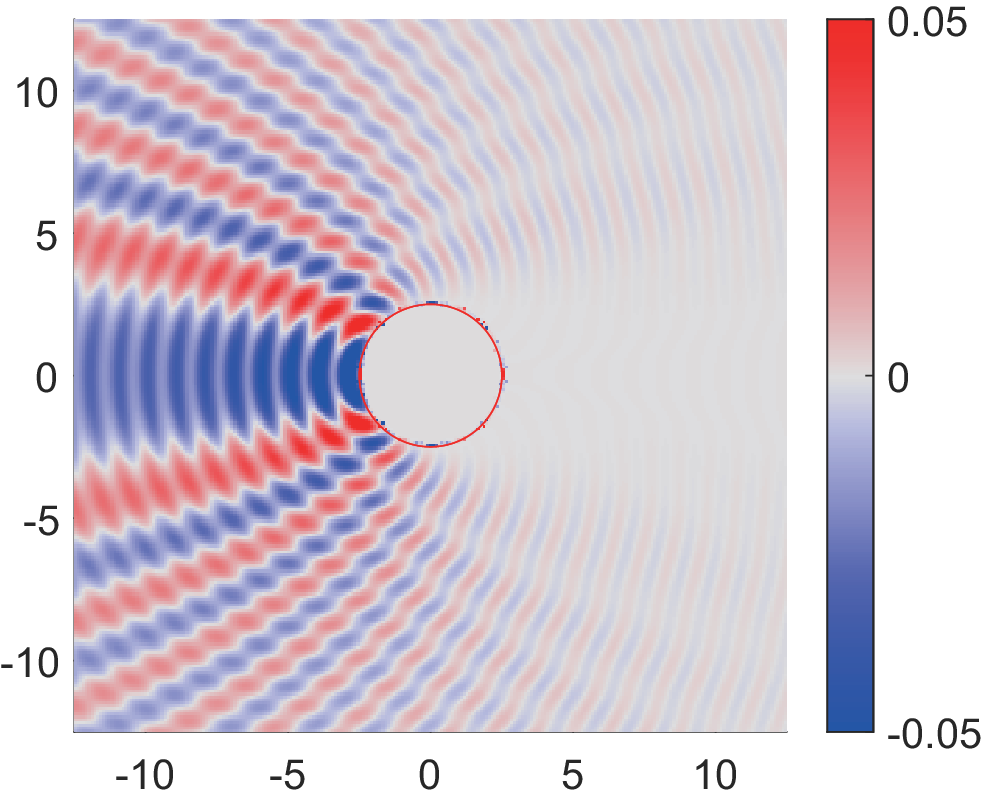}
    \caption{$\mathbf{Re}\left(\mathbb{M}^2[u]- \mathcal{E}^2_1\right)$}
    \label{Rand2e}
    \end{subfigure}
    \hfill
    \begin{subfigure}[b]{0.32\linewidth}
    \centering
    \includegraphics[width=\textwidth]{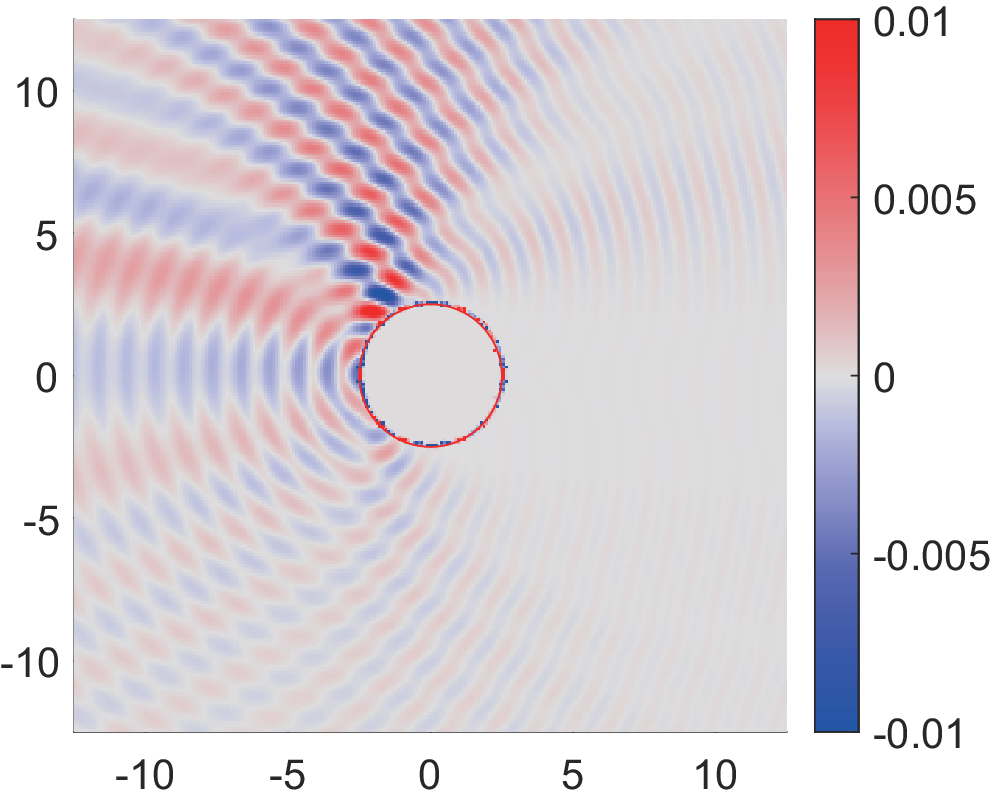}
    \caption{$\mathbf{Re}\left(\mathbb{M}^2[u] - \mathcal{E}^2_2\right)$}
    \label{Rand2f}
    \end{subfigure}

    \caption{The result of Monte Carlo simulations and the estimation error for the shape Taylor expansions. Figures~\ref{Rand2a} and~\ref{Rand2d} show the Monte Carlo simulation of the first and second order moment, respectively. Figures~\ref{Rand2b} and~\ref{Rand2e} show the estimation errors based on the first order shape Taylor expansion. Figures~\ref{Rand2c} and~\ref{Rand2f} show the estimation errors based on the second order expansion.} 
    \label{Rand2}
\end{figure}

During the test, the impedance coefficient in equation~\eqref{scattering_bound2} is set as $\lambda = 100$. The incident field is generated by a plane wave with $\mathbf{z} = [1,0]^\top$ and $k = \pi$. Figure~\ref{Rand2} shows the estimation of the first and second order moments in $D_{\rm ex}$ by applying perturbations with $\epsilon = 0.03$. This result implies that introducing the second order shape derivatives significantly improves the estimation accuracy, especially for the second order moment. In Figure~\ref{Rand3}, we give the estimation residuals with the 1st, 2nd, 4th, and 7th order moments and different perturbation magnitudes $\epsilon$. The results of this experiment also demonstrate that the estimation based on the second order expansion is more reliable, especially when the perturbation amplitude is relatively large. 

According to Theorem \ref{centralmom}, for estimating the variance $\mathbb{VAR}[u]$~\cite{hao2018computation,harbrecht2013first}, the accuracies based on the first and second order shape Taylor expansions are both on the order of $\mathcal{O}\left(\epsilon^4\right)$. In other words, computing the second order expansion does not provide a higher approximation accuracy than the first order expansion. Thus we skip the comparison as a distinct difference can only be observed when using higher order expansions.

\begin{figure}
    \centering
    \includegraphics[width=0.97\linewidth]{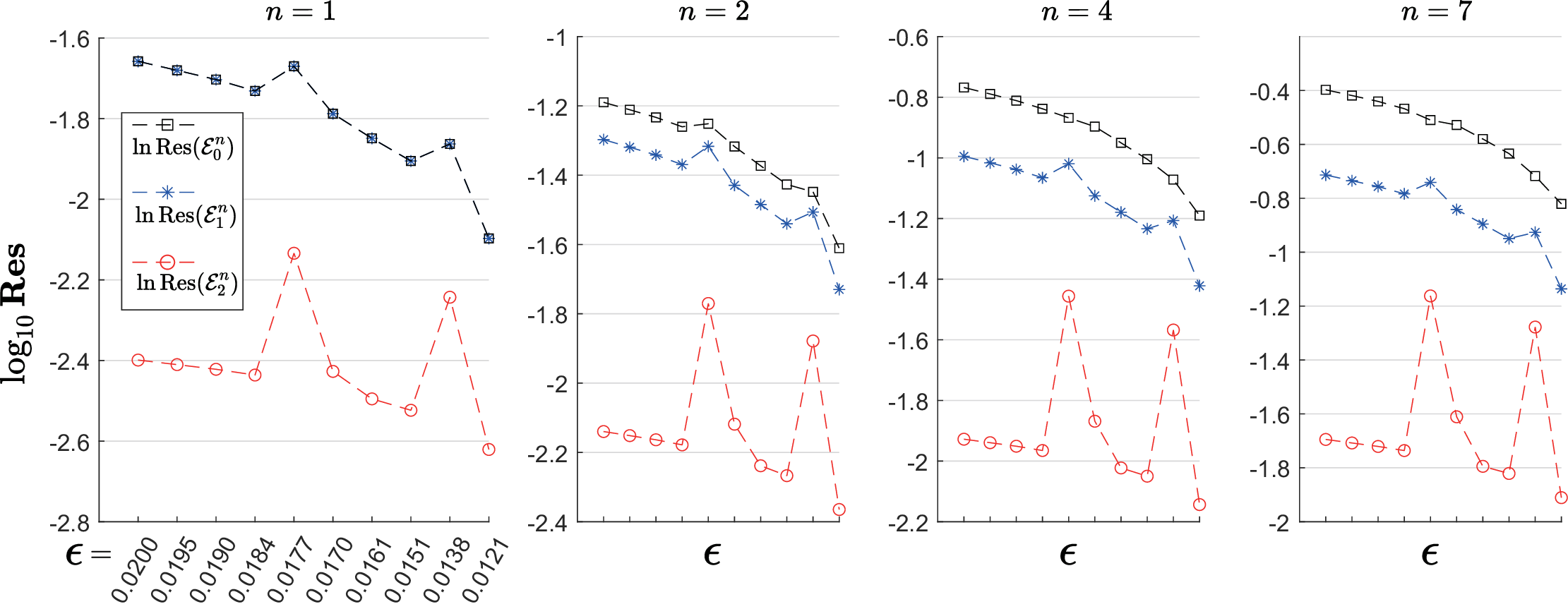}
    \caption{Residuals for the 1st, 2nd, 4th, and 7th moments with different perturbation magnitude. Each subplot contains three residuals corresponding to the 0th, 1st, and 2nd order shape Taylor expansions.}
    \label{Rand3}
\end{figure}

\section{Conclusion}\label{Conclusion}
This paper proposes a computational framework for the shape Taylor expansions in two dimensional acoustic scattering problems with sound-soft, sound-hard, impedance, and transmission boundary conditions. Using boundary integral equations, we discuss the approximation properties of shape Taylor expansions of different orders. We also apply the shape Taylor expansion to uncertainty quantification problems. From the numerical examples, we observe that the higher order shape Taylor expansions offer improved approximations for larger shape perturbations of scatterers and provide efficient and accurate moment estimates for random boundary scattering problems. The computational method can be extended to three-dimensional acoustic scattering, as well as electromagnetic scattering problems. We will explore these extensions in future work.

\bibliographystyle{plain}
\bibliography{REF.bib}

\begin{thebibliography}{10}

\bibitem{bao2014optimal}
Gang Bao and Jun Lai.
\newblock Optimal shape design of a cavity for radar cross section reduction.
\newblock {\em SIAM Journal on Control and Optimization}, 52(4):2122--2140, July 2014.

\bibitem{bao2015inverse}
Gang Bao, Peijun Li, Junshan Lin, and Faouzi Triki.
\newblock Inverse scattering problems with multi-frequencies.
\newblock {\em Inverse Problems}, 31(9), 2015.

\bibitem{bao2011imaging}
Gang Bao and Junshan Lin.
\newblock Imaging of local surface displacement on an infinite ground plane: the multiple frequency case.
\newblock {\em SIAM Journal on Applied Mathematics}, 71(5):1733--1752, 2011.

\bibitem{bao2025shape}
Gang Bao, Haoran Ma, Jun Lai, and Jingzhi Li.
\newblock Shape taylor expansion for wave scattering problems.
\newblock {\em arXiv:2501.03719}, 2025.

\bibitem{colton2019inverse}
David Colton and Rainer Kress.
\newblock {\em Inverse Acoustic and Electromagnetic Scattering Theory}.
\newblock Springer Nature, 4th edition, 2019.

\bibitem{delfour2011shapes}
Michel~C Delfour and J-P Zol{\'e}sio.
\newblock {\em Shapes and geometries: metrics, analysis, differential calculus, and optimization}.
\newblock SIAM, 2011.

\bibitem{dolz2020higher}
J{\"u}rgen D{\"o}lz.
\newblock A higher order perturbation approach for electromagnetic scattering problems on random domains.
\newblock {\em SIAM/ASA Journal on Uncertainty Quantification}, 8(2):748--774, June 2020.

\bibitem{escapil2023shape}
Paul Escapil-Inchauspé and Carlos Jerez-Hanckes.
\newblock Shape uncertainty quantification for electromagnetic wave scattering via first-order sparse boundary element approximation.
\newblock {\em IEEE Transactions on Antennas and Propagation}, 72(8):6627--6637, 2024.

\bibitem{fink2023domain}
Leonie Fink and Frank Hettlich.
\newblock The domain derivative in inverse obstacle scattering with nonlinear impedance boundary condition.
\newblock {\em Inverse Problems}, 40(1), 2023.

\bibitem{hagemann2019solving}
Felix Hagemann, Tilo Arens, Timo Betcke, and Frank Hettlich.
\newblock Solving inverse electromagnetic scattering problems via domain derivatives.
\newblock {\em Inverse Problems}, 35(8), 2019.

\bibitem{hagemann2020application}
Felix Hagemann and Frank Hettlich.
\newblock Application of the second domain derivative in inverse electromagnetic scattering.
\newblock {\em Inverse Problems}, 36(12), 2020.

\bibitem{hao2018computation}
Yongle Hao, Fengdai Kang, Jingzhi Li, and Kai Zhang.
\newblock Computation of moments for maxwell's equations with random interfaces via pivoted low-rank approximation.
\newblock {\em Journal of Computational Physics}, 371:1--19, 2018.

\bibitem{harbrecht2013first}
Helmut Harbrecht and Jingzhi Li.
\newblock First order second moment analysis for stochastic interface problems based on low-rank approximation.
\newblock {\em ESAIM: Mathematical Modelling and Numerical Analysis}, 47(5):1533--1552, 2013.

\bibitem{harbrecht2018second}
Helmut Harbrecht and Michael~D Peters.
\newblock The second order perturbation approach for elliptic partial differential equations on random domains.
\newblock {\em Applied Numerical Mathematics}, 125:159--171, 2018.

\bibitem{henriquez2021shape}
Fernando Henriquez.
\newblock {\em Shape uncertainty quantification in acoustic scattering}.
\newblock PhD thesis, ETH Zurich, 2021.

\bibitem{hettlich1999second}
Frankk Hettlich and Williamm Rundell.
\newblock A second degree method for nonlinear inverse problems.
\newblock {\em SIAM Journal on Numerical Analysis}, 37(2):587--620, 1999.

\bibitem{hiptmair2018shape}
Ralf Hiptmair and Jingzhi Li.
\newblock Shape derivatives for scattering problems.
\newblock {\em Inverse Problems}, 34(10), July 2018.

\bibitem{kress2018inverse}
Rainer Kress and William Rundell.
\newblock Inverse scattering for shape and impedance revisited.
\newblock {\em The Journal of Integral Equations and Applications}, 30(2):293--311, 2018.

\bibitem{ma2024inverse}
Haoran Ma, Gang Bao, Jun Lai, and Junshan Lin.
\newblock Inverse design of a grating metasurface for enhancing spontaneous emission through hyperbolic metamaterials.
\newblock {\em J. Opt. Soc. Am. B}, 41(2):A79--A85, February 2024.

\bibitem{novotny2019topological3}
Antonio~Andr{\'e} Novotny, Jan Soko{\l}owski, and Antoni {\.Z}ochowski.
\newblock Topological derivatives of shape functionals. part iii: second-order method and applications.
\newblock {\em Journal of Optimization Theory and Applications}, 181:1--22, 2019.

\bibitem{schulz1998computation}
Heiko Schulz, Christoph Schwab, and Wolfgang~L Wendland.
\newblock The computation of potentials near and on the boundary by an extraction technique for boundary element methods.
\newblock {\em Computer methods in applied mechanics and engineering}, 157(3):225--238, 1998.

\bibitem{schwab1999extraction}
Christoph Schwab and Wolfgang~L Wendland.
\newblock On the extraction technique in boundary integral equations.
\newblock {\em Mathematics of computation}, 68(225):91--122, 1999.

\bibitem{sokolowski1992introduction}
Jan Sokolowski and Jean-Paul Zol{\'e}sio.
\newblock {\em Introduction to shape optimization}.
\newblock Springer, 1992.

\end{thebibliography}

\end{document}